\newcommand*{\diam}{\mathord{\diamond}}
\definecolor{patriarch}{rgb}{0.5, 0.0, 0.5}
\newtheorem{theorem}{Theorem}[section]
\newtheorem{corollary}[theorem]{Corollary}
\newtheorem{lemma}[theorem]{Lemma}
\newtheorem{question}[theorem]{Question}
\newtheorem{prop}[theorem]{Proposition}
\newtheorem*{theoremsp2}{Theorem \ref{thm:kdiamond}}
\newtheorem*{theoremsp3}{Theorem \ref{cyclicity}}
\newtheorem*{theoremsp5}{Theorem \ref{thm:allinthefamily}}
\theoremstyle{definition}
\newtheorem{definition}[theorem]{Definition}
\newtheorem{example}[theorem]{Example}
\title{Universal Partial Words over Non-Binary Alphabets}%
\author[Goeckner, Groothuis, Hettle, Kell, Kirkpatrick, Kirsch, and Solava]{Bennet Goeckner*, Corbin Groothuis\ddag, Cyrus Hettle\dag, Brian Kell*, \\ Pamela Kirkpatrick\dag, Rachel Kirsch\ddag, and Ryan Solava\dag}
\thanks{*Research supported in part by National Science Foundation grant \# 1604733 ``Collaborative Research: Rocky Mountain-Great Plains Graduate Research Workshop in Combinatorics''.}
\thanks{\dag Research supported in part by National Security Agency Grant \# H98230-16-1-0018 ``The 2016 Rocky Mountain - Great Plains Graduate Research Workshop in Combinatorics''.}
\thanks{\ddag Research supported in part by a generous grant from the Institute of Mathematics and its Applications.}
\keywords{Combinatorics on words, universal cycles}
\subjclass[2010]{Primary 68R15}
\begin{document}
\begin{abstract}
Chen, Kitaev, M\"{u}tze, and Sun recently introduced the notion of universal partial words, a generalization of universal words and de Bruijn sequences. Universal partial words allow for a wild-card character $\diam$, which is a placeholder for any letter in the alphabet. We extend results from the original paper and develop additional proof techniques to study these objects. For non-binary alphabets, we show that universal partial words have periodic $\diam$ structure and are cyclic, and we give number-theoretic conditions on the existence of universal partial words. In addition, we provide an explicit construction for an infinite family of universal partial words over non-binary alphabets. 
\end{abstract}

\maketitle
\section{Introduction}

\emph{Universal cycles} of a wide variety of combinatorial structures have been well studied~\cite{Chung}.
The best known examples are the \emph{de Bruijn sequences}, cyclic sequences over an alphabet $A$ that contain each word of length $n$ as a substring exactly once. For example, $(00010111)$ is a de Bruijn sequence for $A=\{0,1\}$ and $n=3$.

De Bruijn sequences can also be written as non-cyclic sequences. We denote the set of all words of length $n$ over a finite alphabet $A$ by $A^n$. A \emph{universal word} for $A^n$ is a word $w$ such that each word in $A^n$ appears exactly once as a substring of $w$. For example, $0001011100$, $0010111000$, and $1011100010$ are universal words for $\{0,1\}^3$, obtained by splitting the de Bruijn sequence $(00010111)$ and repeating the first $n-1=2$ characters at the end so that the same substrings occur when read non-cyclically. Every universal word corresponds to a de Bruijn sequence in this way.

The length of a universal word for $A^n$ is $|A|^n + n - 1$. It is known that universal words for $A^n$ exist for any $n$; they can be found through Eulerian and Hamiltonian cycles in the de Bruijn graph \cite{CKSpub}. However, as there are $|A|^{|A|^n+n-1}$ words over $A$ of the correct length, a brute-force search for universal words would quickly become intractable.

Now consider extending the alphabet $A$ to $A\cup\{\diam\}$, where $\diam \notin A$ is a wild-card character that can correspond to any letter of $A$. \emph{Partial words} are sequences of characters from $A\cup\{\diam\}$. Partial words are natural objects in coding theory and theoretical computer science. There are also applications in molecular biology and data communication \cite{Brownstein}. For example, when representing DNA and RNA as a string for computing purposes, the $\diam$ character can take the place of any unknown nucleotide.

For any partial word $u$, we denote by $u_i$ the $i$th character of $u$. Given partial words $u$ and $v$, we say that $u \subset v$ (or $u$ is a \emph{factor} of $v$, or $v$ \emph{covers} $u$), if $u$ can be found as a consecutive substring of $v$ after possibly replacing some $\diam$ characters in $v$ with a letter from $A$. Formally, $u \subset v$ if there exists $i$ such that $u_j = v_{i+j}$ for $1 \le j \le |u|$ whenever $v_{i+j} \in A$. For example, $01\diam 1 \subset 1\diam1\diam110$, but $01\diam1 \not\subset 1\diam10110$. 

Chen, Kitaev, M\"{u}tze, and Sun~\cite{CKSpub} introduced the notion of \emph{universal partial words} for $A^n$, generalizing universal words.

\begin{definition}A \emph{universal partial word} for $A^n$ is a partial word $w$ that covers each word in $A^n$ exactly once.
\end{definition}

For example, $\diam\diam0111$ is a universal partial word for $\{0,1\}^3$. Allowing $\diam$'s decreases the number of characters required to cover all words of length $n$, so universal partial words may be useful in questions related to storing information in compact form. Trivially, $\diam^n$ is a universal partial word for $A^n$ for any $A$ and $n$. A universal partial word $w$ is called \emph{trivial} if all of its characters are diamonds ($w =\diam^n$) or none are (i.e. $w$ is a universal word for $A^n$). We are interested in the existence of nontrivial universal partial words.

To begin, we present some preliminaries in Section \ref{prelim}. In Section \ref{binary}, we generalize the single-diamond study of \cite{CKSpub} to single strings of diamonds, and prove the following result:
\begin{theoremsp2}
For $|A|\geq 2$ and $2\le k \le n/2$, there does not exist a universal partial word $w= u\diam^k v$ where $u$ and $v$ are (possibly empty) words.
\end{theoremsp2}

Every universal word can be made into a de Bruijn sequence by deleting the last $n-1$ characters, which are the same as the first $n-1$ characters. We call a universal partial word \emph{cyclic} if its first and last $n-1$ characters are the same and non-overlapping, so that deleting the last $n-1$ characters and then reading cyclically yields the same substrings. In this sense, every universal word is cyclic, but some universal partial words over $\{0,1\}$, such as $\diam\diam0111$, are not. We prove in Section \ref{structure} that all non-binary universal partial words are cyclic.

\begin{theoremsp3} If $w$ is a nontrivial, non-binary universal partial word for $A^n$, then $w$ is cyclic.
\end{theoremsp3}

The possible diamond structures of non-binary universal partial words are more limited than when considering the binary case, which both restricts the possible lengths of universal partial words and gives rise to number theoretic conditions limiting their existence.

In Section \ref{construction}, we show that there are no nontrivial universal partial words for $n\leq 3$ over non-binary alphabets. 
For $n=4$, there are no universal partial words over alphabets of odd size. However, we show the following:

\begin{theoremsp5}
For any alphabet $A$ of even size, there exists a nontrivial universal partial word for $A^4$.
\end{theoremsp5}

In Section \ref{construction}, we give an explicit construction for words of this type. They are the first known nontrivial, non-binary universal partial words. Finally, in Section \ref{open}, we discuss some open questions.

\section{Preliminaries}\label{prelim}

An \emph{alphabet} $A$ is a set of symbols, which we call \emph{letters}. A \emph{character} is a letter or $\diam$. Throughout this paper, we will denote the size of the alphabet $A$ by $a$ and assume without loss of generality that the alphabet is $\{0, 1, \ldots, a-1\}$.

A \emph{word} over an alphabet $A$ is a sequence of letters. A \emph{partial word} is a sequence of characters from $A\cup\{\diam\}$. Note that a word cannot contain a $\diam$; we sometimes use the term \emph{total word} instead of \emph{word} for emphasis.

Note that if $w$ is a universal partial word for $A^n$, then the reverse of $w$ is as well. We will refer to this as the \emph{reversal property}. Permuting the letters of $A$ in $w$ also yields a universal partial word.

Given a universal partial word $w$, a \emph{window} of $w$ is a string of $n$ consecutive characters in $w$. The \emph{frame} of a partial word is the word over $\{\_,\diam\}$ obtained by replacing all letters from $A$ by the ``$\_$" character. For example, the partial word $x y \diam z \diam$ has frame $\_~\_~\diam~\_~\diam$. A \emph{window frame} is the frame of a window. In the above example, if $n=3$, the second window is $y\diam z$, and the second window frame is $\_~\diam~\_$.

Theorem \ref{diamondicity} shows that when $a \ge 3$, every window has the same number of diamonds, so for a universal partial word $w$ over a non-binary alphabet, we define the \emph{diamondicity} of $w$ as the number of diamonds that appear in any window of $w$. Furthermore, some universal partial words over binary alphabets also have well-defined diamondicity.

We also use the ideas of \emph{borders} and \emph{periods}, which are related and fundamental concepts in the study of combinatorics on words~\cite{BCK}. A partial word $w$ has a border $x$ of length $k$ if both the first $k$ characters and the last $k$ characters of $w$ cover $x$. A period of a word $w$ is a positive integer $p$ such that $w_i = w_j$ whenever $i \equiv j \pmod p$. Borders and periods have the following well known relationship.

\begin{theorem} [Folklore, e.g. Proposition 1 from \cite{Blanchet-Sadri}] \label{borderperiod} A word $w$ has period $p \in [|w|-1]$ 
if and only if it has a border of length $|w|-p$.\end{theorem}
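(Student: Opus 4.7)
The plan is to observe that both ``period $p$'' and ``border of length $n-p$'' are equivalent to the same family of local equalities $w_i = w_{i+p}$ for $1 \le i \le n-p$, so the theorem essentially asserts that two global repackagings of these local constraints coincide. Throughout, let $n = |w|$ and $k = n - p$; by hypothesis $p \in \{1,\ldots,n-1\}$, so $k \ge 1$.

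For the forward direction I would specialize the period hypothesis to the pairs $(i, i+p)$ for $i = 1, \ldots, n-p$: these indices are congruent mod $p$ and both lie in $[n]$, so $w_i = w_{i+p}$. Reading this off position by position, the prefix $w_1 \cdots w_k$ equals the suffix $w_{p+1} \cdots w_n$, which is precisely a border of length $k$.

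For the converse, the border condition directly yields $w_i = w_{i+p}$ on the range $1 \le i \le n-p$, but this needs to be propagated to all pairs of indices congruent mod $p$. Given $i < j$ in $[n]$ with $j - i = mp$, the plan is to iterate the one-step identity, chaining $w_i = w_{i+p} = w_{i+2p} = \cdots = w_{i+mp} = w_j$. Each of the $m$ steps applies the hypothesis at an index in the valid range $[1, n-p]$, since the intermediate values $i + \ell p$ with $0 \le \ell < m$ are bounded above by $j - p \le n - p$.

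I do not expect any real obstacle: the statement is folklore and the proof reduces to unwinding definitions. The only delicate point is the bookkeeping in the backward iteration to ensure every index stays within $[n]$, which is immediate from $j \le n$. In particular, the hypothesis that $p$ is a positive integer strictly less than $|w|$ is exactly what guarantees that the border length $k = n - p$ lies in the nonempty range $\{1,\ldots,n-1\}$ and that the range of $i$ used above is nonempty.
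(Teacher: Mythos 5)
Your proof is correct. The paper does not actually prove this statement---it is cited as folklore with a pointer to Proposition 1 of Blanchet-Sadri et al.---so there is no in-paper argument to compare against; your write-up is the standard definitional unwinding (period $\Rightarrow$ prefix--suffix match by specializing to the pairs $(i,i+p)$; border $\Rightarrow$ period by chaining $w_i = w_{i+p} = \cdots = w_{i+mp}$ with the index bound $i+\ell p \le n-p$ checked at each step), and it is complete.
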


\section{Consecutive Diamonds over a Binary Alphabet}\label{binary}

In \cite{CKSpub}, Chen et al.\ explore existence and non-existence of  universal partial words containing a single $\diam$. They then generalize their techniques to consider all universal partial words containing two $\diam$'s. In this section, we show non-existence results for a different type of generalization, that of universal partial words containing a single string of diamonds of length less than $n/2$.

We use the following lemma, which restricts the periodicity of any $n-k$ letters following $k$ consecutive diamonds in a universal partial word.

\begin{lemma} \label{noperiodicsuffix}
For $a\geq 2$, there does not exist a universal partial word $w$ containing the substring $u\diam ^k v$ where $u$ and $v$ are words, $|v|=n-k$, $v$ has period $p \leq k$, and $|u|=p$.
\end{lemma}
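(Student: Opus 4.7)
The plan is to argue by contradiction. Suppose such a $w$ exists, containing a substring $u\,\diam^k\,v$ with $|u|=p$, $|v|=n-k$, and $v$ of period $p\le k$. The strategy is to pick two distinct windows of $w$ and construct a single total word in $A^n$ that is covered by both, contradicting the universal partial word property.

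The two natural candidates are the window $W_1$ of length $n$ starting at the first character of $u$, and the window $W_2$ of length $n$ starting at the first diamond of the block $\diam^k$. These are distinct because $p\ge 1$, and both fit inside the substring $u\diam^k v$ (which has length $p+n$). In the main case $p+k\le n$,
\[
W_1 \;=\; u_1\cdots u_p\,\diam^k\,v_1\cdots v_{n-p-k}, \qquad W_2 \;=\; \diam^k\,v_1\cdots v_{n-k},
\]
so $W_1$ fixes the letters in positions $1,\ldots,p$ and $p+k+1,\ldots,n$, while $W_2$ fixes the letters in positions $k+1,\ldots,n$.

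To build a total word $T\in A^n$ covered by both, I would set $T_j = u_j$ for $1\le j\le p$, let $T_j$ be an arbitrary letter for $p<j\le k$, and set $T_j = v_{j-k}$ for $k+1\le j\le n$. Coverage by $W_2$ is immediate. For $W_1$, the constraints on positions $1,\ldots,p$ hold by construction, and the constraints on $p+k+1,\ldots,n$ demand $v_{j-k}=v_{j-p-k}$; reindexing $i=j-p-k$, this becomes $v_i=v_{i+p}$ for $1\le i\le n-p-k$, which is exactly the periodicity hypothesis. Thus $T$ is covered at two distinct positions of $w$, contradicting universality.

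The real content of the argument is the observation that the shift $p$ between $W_1$ and $W_2$ matches the period of $v$, so that the letter constraints coming from the two window frames are automatically compatible on their overlap; everything else is indexing. The one subtlety I anticipate is the degenerate case $p+k>n$, where $W_1$ never reaches $v$; but in that regime the periodicity condition on $v$ is essentially vacuous and the same choice of $T$ works, with $W_1$-coverage reducing to the trivial condition on positions $1,\ldots,p$.
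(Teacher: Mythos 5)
Your proposal is correct and follows essentially the same argument as the paper: both take the window beginning at $u$ and the window beginning at the first diamond, and exhibit the common covered word $u\,c^{k-p}\,v$ (the paper fixes the filler letters to $0$), with the periodicity of $v$ ensuring compatibility of the overlapping constraints. The degenerate case you flag is handled implicitly in the paper by the same observation.
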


\begin{proof}
Suppose such a $w$ exists and let $v=v_1v_2\cdots v_{n-k}$, with $v_i\in A$. 
Since $v$ has period $p$, by Theorem \ref{borderperiod}, $v_1v_2\cdots v_{n-k-p}=v_{p+1}v_{p+2}\cdots v_{n-k}$, and so $\diam^p\,v_1v_2\cdots v_{n-k-p}$ covers $v$. 
Thus, the word $u\,0^{k-p}\,v\in A^n$ is covered by both the window starting with $u$ ($u\diam ^k v_1v_2\cdots v_{n-k-p}$) and the window starting with the first diamond ($\diam ^k v$). Therefore, $w$ is not a universal partial word.
\end{proof}

Lemma \ref{noperiodicsuffix} and the following result hold for any alphabet size. We address only the binary case in the following theorem, as a stronger non-binary analogue appears in Section \ref{structure} (Proposition \ref{nbkdiam}).

\begin{theorem} \label{kmiddlediamonds}
If $w = u\diam^kv$ is a universal partial word where $a = 2$, $k \ge 2$, and $u$ and $v$ are nonempty words, then $\vert u\vert \le n-1$ and $\vert v\vert \le n-1$.
\end{theorem}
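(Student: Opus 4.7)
The plan is a proof by contradiction using the reversal property. Since reversing $w=u\diam^k v$ yields the universal partial word $\tilde v\diam^k\tilde u$, it suffices to prove $|v|\le n-1$; the companion bound $|u|\le n-1$ then follows by applying this statement to the reverse. Set $s=|u|$, $t=|v|$, and suppose for contradiction that $t\ge n$. In that case, for every $0\le j\le k$ the right-side window $X_j=\diam^j v_1\cdots v_{n-j}$ is present in $w$, with $X_0=v_1\cdots v_n$ being the first window entirely inside $v$. The coverage identity $\sum_i 2^{j_i}=2^n$, where $j_i$ is the number of diamonds in the $i$-th length-$n$ window of $w$, yields a precise linear relation between $s$ and $t$ (with the exact form depending on which individual one-sided windows $Y_j$ and $X_j$ exist).

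The first substantive step is to extract strong aperiodicity of $v$ from pairwise window overlaps. Comparing $X_0$ with $X_j$ for $1\le j\le k$ shows that if $v_1\cdots v_n$ had period $p\in\{1,\ldots,k\}$, then $X_0$ and $X_p$ would cover a common length-$n$ word, contradicting universality. Iterating this observation through the interior $v$-windows $v_i\cdots v_{i+n-1}$ for $i\ge 2$ against every $X_j$ strengthens the conclusion to: $v_1\cdots v_t$ has no period in $\{1,\ldots,t-n+k\}$. Lemma~\ref{noperiodicsuffix} applied to the reversed partial word $\tilde w$ then yields analogous aperiodicity on the relevant suffix of $u$.

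Next I would examine the family of $k$-diamond windows $Z_l=u_{s-l+1}\cdots u_s\diam^k v_1\cdots v_{n-k-l}$ for $0\le l\le\min(s,n-k)$. A direct case check gives: for $l\le k$, $Z_0\cap Z_l\ne\emptyset$ iff $v_1\cdots v_{n-k}$ has period $l$, which is ruled out by the aperiodicity above; for $l>k$, $Z_0\cap Z_l\ne\emptyset$ requires both that $v_1\cdots v_{n-k}$ has period $l$ and that the matching condition $u_{s-l+k+1}\cdots u_s=v_1\cdots v_{l-k}$ holds. When $n\le 2k$ the range $l>k$ is vacuous, and $Z_0\cap Z_{n-k}$ is unconditionally nonempty as soon as $s\ge n-k$, giving the contradiction outright; this reduces the problem to the subcase $s<n-k$. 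When $n>2k$, combining the matching condition (for each $l>k$ where $v_1\cdots v_{n-k}$ has period $l$) with the counting identity and with the aperiodicity of the previous step produces the contradiction.

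The hard part will be the residual regime of small $s$ (specifically $s<n-k$), in which few $Z_l$ and almost no $Y$-windows exist and the counting identity forces $t$ to be substantially larger than $n$. In this regime most of the coverage is carried by the $v$-interior windows, and I will need to show that the forced aperiodicity of $v$ is incompatible with packing $t-n+1$ distinct length-$n$ substrings of $v$ into the set of binary words of length $n$ not already covered by some $X_j$. This packing obstruction is where the binary alphabet is used essentially; the analogous stronger statement over larger alphabets (Proposition~\ref{nbkdiam}) is established by entirely different techniques.
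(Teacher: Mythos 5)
Your setup (the reduction to $|v|\ge n$ by reversal, the identification of the windows $X_j$ and $Z_l$, and the pairwise overlap computations yielding aperiodicity constraints on $v$) is sound, but the proposal stops short of a proof precisely where the difficulty lies. For $n>2k$ you assert that combining the matching condition with the counting identity and the aperiodicity ``produces the contradiction,'' and for the regime $s<n-k$ you state that you ``will need to show'' a packing obstruction; neither is carried out, and the second is far from routine --- de Bruijn sequences show that packing many distinct length-$n$ windows of a binary word into $\{0,1\}^n$ is in general perfectly possible, so any obstruction must come from a quantitative interaction with the diamond windows that your outline does not supply. There is also a local slip: you rule out $Z_0\cap Z_l\neq\emptyset$ (for $l\le k$) ``by the aperiodicity above,'' but that aperiodicity concerns $v_1\cdots v_n$ (resp.\ $v_1\cdots v_t$), whereas the overlap condition involves a period of the proper prefix $v_1\cdots v_{n-k}$, which the longer word's aperiodicity does not control; that step is repairable (the overlap itself contradicts universality once $s\ge l$), but it is symptomatic of bookkeeping that is not yet airtight.

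The idea your outline is missing, and on which the paper's proof turns, is a localization rather than a global count: since the window $\diam^k v$ covers every word of length $n$ ending in $v_1\cdots v_{n-k}$, the string $v_1\cdots v_{n-k}$ cannot be covered anywhere in $w$ with $k$ characters preceding it except at the start of $v$; hence the $2^k$ words of length $n$ beginning with $v_1\cdots v_{n-k}$ must be covered by the window $v_1\cdots v_n$ (exactly one of them) together with the $k$ windows starting at positions $1,\ldots,k$ of $w$. Writing $2^k-1$ as a sum of $k$ terms, each equal to $0$ or a power of $2$, forces those windows to cover $2^0,2^1,\ldots,2^{k-1}$ of these words in some (necessarily monotone) order, which pins $w$ down to two rigid shapes: one is eliminated by Lemma \ref{noperiodicsuffix}, and the other by a short direct analysis ending with the word $(10)^{n/2}$ or $(10)^{(n-1)/2}0$ being uncovered. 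I would recommend either adopting this localization or making your packing obstruction precise; as written, the central contradiction has not been derived.
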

\begin{proof}
Suppose to the contrary that $\vert u\vert \ge n$ or $\vert v\vert \ge n$. By the reversal property, without loss of generality, we may assume $\vert v\vert \ge n$.

There are $2^k$ words of length $n$ that begin $v_{1}v_{2}\cdots v_{n-k}$, all of which must be covered by $w$. 
One of these words is $v_{1}v_{2}\cdots v_{n-k}\cdots v_{n}$. 

Since $\diam^{k}v$ covers all words of length $n$ ending in $v_{1}v_{2}\cdots v_{n-k}$, the word $v_{1}v_{2}\cdots v_{n-k}$ is not covered elsewhere in $w$ with $k$ characters preceding it, so the only other places it can begin in $w$ are the first $k$ positions. Thus, the remaining $2^{k}\!-\!1$ words beginning with $v_{1}v_{2}\cdots v_{n-k}$ must be covered by windows that begin in the first $k$ positions of $w$. 

For each such position $i \in [k]$, let $N_{i}$ be the number of words in $A^n$ beginning with $v_{1}v_{2}\cdots v_{n-k}$ that are covered by $W_i := w_i w_{i+1} \cdots w_{i+n-1}$, the window beginning at position $i$ of $w$. Then $\sum_{i=1}^{k}N_{i}=2^{k}-1$, and for all $i$, $N_{i}$ is $0$ or a non-negative power of $2$. Specifically, if $j$ is the number of $\diam$'s in the last $k$ characters of $W_i$, then $N_i=2^j$ when the first $n-k$ characters of $W_i$ cover $v_{1}v_{2}\cdots v_{n-k}$ and $N_i=0$  otherwise. Since $N_i \leq \sum_{i=1}^k N_i=2^k-1$, we have $j< k$.

Now, $2^k-1$ can be uniquely written as a sum of powers of $2$ as $\sum_{i=0}^{k-1}{2^{i}}$, so we must have that $N_{1},\dots,N_{k}$ are $2^{0},2^{1},\dots,2^{k-1}$ in some order.
Since $N_{i+1}\neq N_i$ and the number of $\diam$'s in $W_i$ differs by at most one from the number of  $\diam$'s in $W_{i+1}$, we have $N_{i+1}=2N_{i}$ or $N_{i+1}=\frac{1}{2}N_i$ for all $i\in [k-1]$. Also, since we use each power exactly once, the sequence of $N_i$'s must be monotonic.

\begin{enumerate}
\item[Case 1:] The $N_i$'s are increasing, so $N_{i}=2^{i-1}$ for all $i \in [k]$, and\\ $w=v_{1}v_{2}\cdots v_{n-k}w_{n-k+1}w_{n-k+2}\cdots w_{n}\diam^{k}v$.\\
Since $N_{i}>0$ for all $i$, we have $$v_1v_2\cdots v_{n-k}=v_2\cdots v_{n-k}w_{n-k+1}=\cdots = w_k\cdots w_{n-1},  $$ so $v_{1}=v_{2}=\cdots=v_{n-k} = w_{n-k+1} = w_{n-k+2} = \cdots =w_{n-1}$. In particular, $v_{1}v_{2}\cdots v_{n-k}$ has period 1. Then, since $\vert u\vert\geq 1$, by Lemma \ref{noperiodicsuffix}, $w$ is not a universal partial word.

\item[Case 2:]  The $N_i$'s are decreasing, so $N_{i}=2^{k-i}$ for all $i$, and $w=v_1v_2\cdots v_{n-k-1}\diam^{k}v$ so that there are $2^{k-1}$ words in $A^n$ starting with $v_1\cdots v_{n-k}$ in $W_1 = v_1v_2 \cdots v_{n-k-1}\diam^k v_1$, $2^{k-2}$ words in $A^n$ starting with $v_1\cdots v_{n-k}$ in $W_2 = v_2 \cdots v_{n-k-1}\diam^k v_1 v_2$, and so on through $2^0 = 1$ word in $A^n$ starting with $v_1\cdots v_{n-k}$ in $$W_k = \begin{cases}v_{k}\cdots v_{n-k-1} \diamond^k v_1\cdots v_k& k\leq n-k-1 \\
\diamond^{n-k} v_1\cdots v_k & k\ge n-k\end{cases}.$$
Since $\vert u\vert\geq 1$, this case is only possible if $n-k-1 \geq 1$.
Since $N_{i}>0$ for all $i$, $W_1$ and $W_2$ both cover $v_1\cdots v_{n-k}$ with their first $n-k$ characters, so $v_{1}=v_{2}=\cdots=v_{n-k-1}$. 
If $n-k-1\geq 2$, then $W_1$ and $W_2$ both cover
the word $v_1v_2\cdots v_{n-k}0^{k-2}v_1v_2$.

Thus, $n-k-1=1$, and $w=v_1\diam^kv=v_1\diam^{n-2}v$. 
Without loss of generality, assume $v_1=0$. 
Then, $w=0\diam^{n-2}0v_2v_3\cdots v_{\ell}$ with $\ell\geq n$. 
Now, $W_1$ covers all words beginning and ending with $0$. 
Thus, $v_n=1$, and every letter $n-1$ positions after a diamond is a $1$, so $w=0\diam^{n-2}01^{n-1}v_{n+1}\cdots v_{\ell}$. If $v_{n+1}$ did not exist, then $1^n$ would not covered, and $w$ would not be a universal partial word. 
Next, since $w$ covers $1^n$, $v_{n+1}=1$. Otherwise, $01^{n-1}$ would be covered twice in $w$. 
Thus, $w=0\diam^{n-2}01^nv_{n+2}\cdots v_{\ell}$ (note that $v_{n+2}$ must exist for $w$ to cover $1^{n-1}0$). 
Since $W_2=\diam^{n-2}01$, the string $01$ cannot appear elsewhere in $w$ with $k$ characters preceding it. 
Thus, $v_{n+2}=v_{n+3}=\cdots=v_{\ell}=0$. 
Since $0^n$ is covered by $W_1$, $w=0\diam^{n-2}01^n0^p$ where $p\leq n-1$. 
Now, if $n$ is even, $(10)^\frac{n}{2}$ is not covered by $w$, and if $n$ is odd, $(10)^\frac{n-1}{2}0$ is not covered by $w$. Thus, $w$ is not a universal partial word. 
\end{enumerate}
Therefore, no such $w$ exists.
\end{proof}

We will also require the following lemma.

\begin{lemma}\label{lem:kdiamondstart}
For $a\geq 2$ and $2\le k \le n/2$, there does not exist a universal partial word $w=\diam^k v$ where $v$ is a word.
\end{lemma}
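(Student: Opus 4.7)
The plan is to assume $w = \diam^k v$ is a universal partial word and derive a contradiction by counting. Writing $v = v_1 v_2 \cdots v_m$ and denoting by $W_i = w_i w_{i+1} \cdots w_{i+n-1}$ the window starting at position $i$, introduce the two sets $S \subseteq A^n$ of words ending in $v_1 v_2 \cdots v_{n-k}$ and $T \subseteq A^n$ of words beginning with $v_1 v_2 \cdots v_{n-k}$, each of size $a^k$. The leftmost window $W_1 = \diam^k v_1 v_2 \cdots v_{n-k}$ covers exactly the words of $S$, so every word of $S$ is accounted for by $W_1$ alone.

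The key step is to show that the block of letters $v_1 v_2 \cdots v_{n-k}$ occurs in $w$ only at its natural position $k+1$. Since the diamonds of $w$ occupy exactly positions $1,\ldots,k$, any alternative occurrence must begin at some $q \geq k+2$; in that case the window $W_{q-k}$ (which is distinct from $W_1$) has $v_1 v_2 \cdots v_{n-k}$ as its last $n-k$ characters, so every word $W_{q-k}$ covers ends with $v_1 v_2 \cdots v_{n-k}$ and hence lies in $S$. Because $W_1$ already covers all of $S$ uniquely, this produces the needed double cover. I anticipate this step to be the main obstacle, since once it is established the rest is a short count.

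It remains to count how many words of $T$ can be covered. A window $W_i$ with $i \geq k+2$ contains no diamonds and covers a single word, which belongs to $T$ only if $v_1 v_2 \cdots v_{n-k}$ occurs as a factor starting at position $i$, forbidden by the key step. For $1 \leq i \leq k$, the hypothesis $k \leq n/2$ ensures $k-i+1 \leq k \leq n-k$, so every diamond of $W_i$ sits among its first $n-k$ positions; requiring the covered word to begin with $v_1 v_2 \cdots v_{n-k}$ pins down each of these diamond positions, and $W_i$ contributes at most one word of $T$. Finally, $W_{k+1} = v_1 v_2 \cdots v_n$ is itself a word of $T$. Summing, $|T| \leq k+1$, so $a^k \leq k+1$; but for $a \geq 2$ and $k \geq 2$ we have $a^k \geq 2^k > k+1$, the required contradiction.
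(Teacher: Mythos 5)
Your proof is correct and follows essentially the same route as the paper's: both arguments show that the block $v_1\cdots v_{n-k}$ cannot recur in $w$ after position $k+1$ (since a window ending in that block would double-cover a word already covered by $\diam^k v_1\cdots v_{n-k}$), and then use $k\le n-k$ to pin down the diamond values in each of the first $k+1$ windows, giving at most $k+1 < a^k$ covered words that begin with the block. The only omission is the trivial preliminary check that $|v|\ge n-k$ (otherwise $|w|<n$ and $w$ covers no word of length $n$ at all), which the paper dispenses with in one line.
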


\begin{proof}
If $|v| \le n-k-1$, then $|w| \le n-1$, so $w$ covers no words of length $n$ and is not a universal partial word. Therefore we may assume $|v| \ge n-k$.

Let  $v' = v_1 \dots v_{n-k}$. All words ending with $v'$ are covered by $\diam^k v'$, so $v'$ cannot be covered later in $v$. Thus the $a^k$ words beginning with $v'$ must be covered by the first $k+1$ windows of $w$.
Since $|v'|\geq |\diam^k|$, in each of these windows the letter covered by each $\diamond$ is fixed by its corresponding letter in $v'$. Thus, only one word of length $n$ can be covered by each of the $k+1$ windows, and so $w$ contains at most $k+1$ words beginning with $v'$. Therefore, $w$ cannot contain all $a^k\geq 2^k > k+1$ of them, and $w$ is not a universal partial word.
\end{proof}

Together, these results lead to the following theorem.

\begin{theorem}\label{thm:kdiamond}
For $a\geq 2$ and $2\le k \le n/2$, there does not exist a universal partial word $w= u\diam^k v$ where $u$ and $v$ are (possibly empty) words.
\end{theorem}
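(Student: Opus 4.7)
The plan is a case analysis on whether $u$ or $v$ is empty, reducing each branch to a result already proved. When $u$ is empty, $w = \diam^k v$ falls directly under Lemma \ref{lem:kdiamondstart} (whose hypotheses on $a$ and $k$ match ours exactly), so $w$ cannot be universal. When $v$ is empty, the reversal property shows that $w^R = \diam^k u^R$ would also be a universal partial word, and Lemma \ref{lem:kdiamondstart} again yields a contradiction.

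It remains to treat the case where both $u$ and $v$ are nonempty, which I would split on alphabet size. For $a \ge 3$, I would invoke the stronger non-binary analogue Proposition \ref{nbkdiam} (proved later in Section \ref{structure}), which forbids an interior block of $k$ diamonds in this range. For $a = 2$, Theorem \ref{kmiddlediamonds} immediately restricts us to the regime $1 \le |u|, |v| \le n-1$, leaving only finitely many short configurations for each $n$.

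The main obstacle will be finishing off this short-$u$-and-$v$ binary regime, since Theorem \ref{kmiddlediamonds} only provides length bounds and not outright non-existence. I would adapt the $N_i$-counting from the proof of Theorem \ref{kmiddlediamonds} to the bounded regime: for each window intersecting the $\diam^k$ block, count how many of the $2^k$ words beginning with $v_1 \cdots v_{n-k}$ it covers, and observe that the windows beginning in $u$ together account for the $2^k-1$ words not covered by $\diam^k v$. The forced monotonicity of these counts pins down the first few letters of $u$ (or, by reversal, of $v$) to be periodic with a small period $p \le k$; applying Lemma \ref{noperiodicsuffix} to the resulting factor then produces two windows covering a common length-$n$ word, contradicting the universality of $w$.
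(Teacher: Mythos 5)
Your reductions for the easy branches are sound and essentially match the paper: empty $u$ or $v$ is handled by Lemma \ref{lem:kdiamondstart} (plus reversal), $a \ge 3$ by Proposition \ref{nbkdiam}, and Theorem \ref{kmiddlediamonds} confines the remaining binary case to $1 \le |u|, |v| \le n-1$. The gap is in your plan for that remaining case. The $N_i$-count from the proof of Theorem \ref{kmiddlediamonds} does not transfer to the bounded regime: that argument works precisely because $|v| \ge n$, so the window $v_1\cdots v_n$ beginning immediately after the diamond block exists and accounts for exactly one of the $2^k$ words beginning with $v_1\cdots v_{n-k}$, leaving $2^k-1$ for the windows starting in the first $k$ positions; the uniqueness of the binary expansion $2^k-1=\sum_{i=0}^{k-1}2^i$ is what forces the $N_i$ to be distinct powers of $2$, hence monotone, hence what pins down the periodicity you want to feed into Lemma \ref{noperiodicsuffix}. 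When $|v|\le n-1$ that post-diamond window does not exist, the first $k$ windows must account for all $2^k$ such words, and $2^k$ has many representations as a sum of at most $k$ powers of two (a single window with $N_i=2^k$, two windows with $2^{k-1}$ each, etc.), so no monotonicity or periodicity is forced and the hand-off to Lemma \ref{noperiodicsuffix} does not go through. You would also need $|v|\ge n-k$ for $v_1\cdots v_{n-k}$ to be defined at all, and $|w|\ge n+k-1$ for all $k$ initial windows to exist; neither is guaranteed in this regime.

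The paper avoids all of this with a much cruder global count: with $1\le |u|,|v|\le n-1$ the word $w$ has only $|u|+|v|+k-(n-1)\le n-1+k$ windows, each covering at most $2^k$ words, and $(n-1+k)2^k\le\left((3/2)n-1\right)2^{n/2}<2^n$ for $n\ge 7$; the finitely many remaining pairs $(n,k)$ with $n\le 6$ are then dispatched by sharper window-by-window counts and a short ad hoc argument for $n=4$, $k=2$. If you want to complete your proof, replacing the $N_i$ adaptation with a volume bound of this kind (followed by the small-case check) is the missing ingredient.
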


	\begin{proof} Proposition \ref{nbkdiam} gives a stronger result for $a \ge 3$, so here we assume $a=2$. We proceed by contradiction. Assume  $w= u\diam^k v$ is a universal partial word with $2 \leq k \leq n/2$ such that $u$ and $v$ do not contain any $\diam$'s.  By Theorem \ref{kmiddlediamonds} and Lemma \ref{lem:kdiamondstart}, we have $1\leq \vert u \vert, \vert v \vert \leq n-1$. There are at most $a^k$ words in $A^n$ covered at each possible starting position in $w$. Since $|w|=\vert u \vert + \vert v \vert + k$, the number of words covered by $w$ is at most $\left(\vert u \vert + \vert v \vert + k - (n-1)\right)a^k$. Since $|u|,|v|\leq n-1$ and $k\leq n/2$,
	$$\left(\vert u \vert + \vert v \vert + k - (n-1)\right)a^k\leq (n-1+k)a^k \leq \left((3/2)n - 1\right)a^{n/2}.$$
	
	For $n \ge 7$, this is less than $a^n$, so $w$ does not cover every word in $A^n$ and is not a universal partial word.
	
	Now, since $2\le k\le n/2$, we only need to consider the combinations $n=6$ with $k=2$ or $3$, $n=5$ with $k=2$, and $n=4$ with $k=2$. For $k=2$, there are at most $a$ words covered by $u$ and the first $\diam$. There are at most $(n-1)a^2$ words covered using both $\diam$'s (starting at each of the last $n-2$ positions of $u$ and at the first $\diam$). Finally, there are at most $a$ words covered by the second diamond and $v$. Since $|u|, |v| \leq n-1$, no word can end before or begin after the $\diam$'s. Thus, there are at most $2a+(n-1)a^2$ words covered by $w$. Now, $2a+(n-1)a^2<a+na^2$ since $a^2>a$ for $a\geq 2$. If $n \geq 5$, $w$ does not cover all of the $a^n$ words in $A^n$.
	
	For $k=2$ and $n=4$, we have $1 \le |u|, |v| \le n-1 = 3$, and $w$ must cover $2^4=16$ words of length $4$. 
If $|u|=1$ or $|v|=1$, then $w$ covers at most $2^2+2^2+2=10$ words.
Thus, $|u|,|v|\ge 2$, and $w$ contains $st\diam\diam xy$ for some $s,t,x,y\in A$. Here $w$ covers $stxy$ twice, so $w$ is not a universal partial word and no such $w$ exists.

	The remaining case is $n=6$, $k=3$. Without loss of generality $\vert u \vert = \vert v \vert = n-1$, since shortening $u$ and $v$ decreases the number of words covered. In this case, $w=u_1u_2u_3u_4u_5\diam\diam\diam v_1v_2v_3v_4v_5$. Considering each window of $w$, the number of words covered by $w$ is at most $2+2^2+2^3+2^3+2^3+2^3+2^2+2 = 44 < 2^6$, so $w$ does not cover every word in $A^n$ and is not a universal partial word.
	\end{proof}

This result illustrates that even in the binary case, where universal partial words have less rigid structure, the presence of even two consecutive diamonds in a universal partial word would force the appearance of other diamonds.

\section{Structural Conditions over Non-Binary Alphabets}\label{structure}

In \cite{CKSpub}, Chen et al.\ show that cyclic universal partial words have a rigid diamond structure and length. We will show in this section that all non-binary universal partial words have these properties and that these are enough to prove that all non-binary universal partial words are cyclic. This is in contrast to the binary case, where non-cyclic universal partial words are common. For example, $\diam\diam0111$ and $\diam001011\diam$ are non-cyclic universal partial words for $\{0,1\}^3$.

The following result provides periodic structure for the frame of a universal partial word over a non-binary alphabet.

\begin{theorem}\label{diamondicity} Let $w$ be a universal partial word for $A^n$ with $a \ge 3$. If $w_i =\diam$, then $w_j =\diam$ for all $j \equiv i \pmod n$. In other words, the frame of any such universal partial word has period $n$.
\end{theorem}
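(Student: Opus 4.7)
The plan is to reduce the statement to the constancy of the diamond count across all windows of $w$. Let $D_j$ denote the number of diamonds in the window $W_j = w_j w_{j+1} \cdots w_{j+n-1}$. Then $D_{j+1} - D_j = [w_{j+n} = \diam] - [w_j = \diam]$, so the sequence $(D_j)$ is constant if and only if $w_j = \diam \Leftrightarrow w_{j+n} = \diam$ for every valid $j$, which is precisely the claim of the theorem (the full residue-class statement then follows by iterating).

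I would argue by contradiction. Suppose there exists an index $i$ with $w_i = \diam$ and $w_{i+n} = c$ a letter (the reverse configuration is symmetric by the reversal property). Write $W_i = \diam\, X$ and $W_{i+1} = X c$, where $X = w_{i+1} \cdots w_{i+n-1}$. Because $w$ is a universal partial word, the sets of words covered by $W_i$ and $W_{i+1}$ must be disjoint. I would determine when a word $u \in A^n$ lies in both sets: the forced conditions $u_n = c$ and, for each $k$ with $X_k$ a letter, $u_k = u_{k+1} = X_k$. A direct check shows this intersection is nonempty---producing an immediate double cover and a contradiction---except in two escape configurations: (A) $X_{n-1}$ is a letter different from $c$, or (B) some two adjacent positions of $X$ carry distinct letters.

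To eliminate these remaining cases I would invoke the hypothesis $a \ge 3$, which guarantees a third alphabet letter beyond those appearing in each obstruction. Using this extra letter, I would compare $W_i$ with a further shifted window (for instance $W_{i-1}$, $W_{i+k}$ for some small $k$, or the disjoint window $W_{i+n}$, whose first character is the letter $c$) and construct a specific word of $A^n$ that must be covered twice. The main obstacle is precisely this endgame: cases (A) and (B) are exactly the configurations realized by binary counterexamples such as $\diam\diam 0111$ over $\{0,1\}$, so any successful proof must make essential use of $a \ge 3$. I expect the argument to require a careful local analysis of the frame near positions $i$ and $i+n$, possibly backed by a supporting lemma that restricts the possible arrangements of letters adjacent to the diamond at position $i$ once (A) or (B) is assumed.
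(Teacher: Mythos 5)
Your reduction to the equivalence $w_j=\diam\Leftrightarrow w_{j+n}=\diam$ and your analysis of when the windows $W_i=\diam X$ and $W_{i+1}=Xc$ share a covered word are both correct. But the proof is not complete: the residual cases (A) and (B) are precisely where all the difficulty lives, and your proposed endgame (``compare $W_i$ with a further shifted window\dots{} I expect the argument to require a careful local analysis'') is a plan, not an argument. As you yourself observe, these are the configurations realized by genuine binary examples, so whatever closes them must use $a\ge 3$ in an essential way; no such step is supplied. Moreover, it is doubtful that comparing $W_i$ with another nearby window can work uniformly, since in case (B) the frame of $X$ is unconstrained and the adjacent windows need not overlap on any forced letters at all.

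The paper's proof avoids this case analysis entirely by arguing globally rather than locally. Fix any total word $v$ of length $n-1$ covered by $w_{i+1}\cdots w_{i+n-1}$ and suppose $w_{i+n}=0$. Since $W_i=\diam(w_{i+1}\cdots w_{i+n-1})$ covers \emph{all} $a$ words ending in $v$, the string $v$ cannot occur anywhere else in $w$ with a character preceding it; hence the only other place $v$ can be covered is the prefix $w_1\cdots w_{n-1}$. The words $v1$ and $v2$ (here is where $a\ge 3$ enters) must therefore both be covered by the first window, forcing $w_n=\diam$; but then the first window also covers $v0$, which is already covered by $W_{i+1}$, a double cover. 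Note that this argument never needs $W_i$ and $W_{i+1}$ to share a covered word, which is exactly the step that fails in your cases (A) and (B). If you want to salvage your approach, the lesson is to exploit the fact that $W_i$ exhausts the entire fiber $\{xv: x\in A\}$ and then track where the words \emph{beginning} with $v$ can be covered, rather than hunting for a collision between two specific windows.
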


\begin{proof}We will show that if $w_i =\diam$, then $w_{i+n} =\diam$ for any $i \in [|w|-n]$. By the reversal property, this is sufficient to obtain the theorem.

Assume $w_i =\diam$, and suppose to the contrary that $w_{i+n} \in A$. Without loss of generality, let $w_{i+n} = 0$. Let $v$ be any total word covered by $w_{i+1}w_{i+2}\cdots w_{i+n-1}$. Then, the window of $w$ starting at $w_{i+1}$ covers $v0$, and the window starting at $w_i$ covers $\diam v$ and thus covers all words of length $n$ ending in $v$. Thus, to avoid repeating words ending in $v$, $v$ cannot be covered elsewhere in $w$ except by the first window. 

Since $w$ is a universal partial word, it must also cover the words $v1$ and $v2$ exactly once. These words must be covered by $w_1\cdots w_n$. For both $v1$ and $v2$ to be covered, $w_n=\diam$, so $w_1\cdots w_n$ also covers the word $v0$. This instance of the word $v0$ is different from the one that appears starting with $w_{i+1}$. Therefore the word $v0$ is covered twice in $w$, contradicting that $w$ is a universal partial word.
\end{proof}

Theorem \ref{diamondicity} makes possible the following definition of diamondicity. 
\begin{definition}
For $w$ a universal partial word for $A^n$, with $a\geq 3$, the \emph{diamondicity} of $w$ is the number of diamonds in each window. The window frames of $w$ are cyclic shifts of the first window frame.
\end{definition}

For binary alphabets, the number of diamonds is not enough to determine the length of a universal partial word. For example, $\diam\diam 0111$ and $\diam 001011\diam$ both are universal partial words for $\{0,1\}^3$ with two diamonds but have different lengths, 6 and 8. However, universal partial words with diamondicity have length determined by $a$, $n$, and $d$. Corollary 15 from \cite{CKSpub} determines the lengths of cyclic universal partial words, but the corollary below requires only the weaker assumption of well-defined diamondicity. We note that diamondicity is also well-defined for some universal partial words over binary alphabets; for example, $01 \diam 110 \diam 001 \diam$ is a universal partial word for $\lbrace 0,1 \rbrace^4$ with $d=1$. 

\begin{corollary}\label{length}
If $w$ is a universal partial word for $A^n$ with diamondicity $d$, then $\vert w \vert = a^{n-d}+n-1$.
\end{corollary}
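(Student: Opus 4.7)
The plan is to prove this by a direct double-counting argument on the number of (window, covered word) pairs.

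First, I would observe that by Theorem \ref{diamondicity} (or by the hypothesis that diamondicity is well-defined in the possibly-binary case), every window of $w$ contains exactly $d$ diamonds and $n-d$ fixed letters. Since assigning letters from $A$ to the $d$ diamond positions in a given window produces a distinct total word for each of the $a^d$ possible assignments, each window covers exactly $a^d$ words of $A^n$, and each such word is covered by that window in exactly one way.

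Next, I would count the pairs $(W, v)$ where $W$ is a window of $w$ and $v \in A^n$ is a word covered by $W$. On one hand, summing over windows gives $N \cdot a^d$, where $N = |w| - n + 1$ is the number of windows. On the other hand, since $w$ is a universal partial word, each $v \in A^n$ is covered exactly once, so the total number of such pairs is $|A^n| = a^n$. Equating these and solving yields
\[
|w| - n + 1 = a^{n-d}, \qquad \text{hence} \qquad |w| = a^{n-d} + n - 1.
\]

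There is no real obstacle here; the only subtlety is confirming that a single window cannot cover the same word in two different ways (which holds because the $n-d$ non-diamond positions are fixed, so the diamond-letter assignment realizing a given $v$ is unique) and that the diamondicity hypothesis guarantees every window contributes the same count $a^d$ to the sum. Once these observations are in place, the equality is immediate.
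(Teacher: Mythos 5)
Your proof is correct and follows essentially the same counting argument as the paper: each window contains exactly $d$ diamonds and hence covers exactly $a^d$ words, so since each of the $a^n$ words is covered exactly once, there must be $a^{n-d}$ windows, giving $|w| = a^{n-d} + n - 1$. Your double-counting framing and the explicit check that a window covers each of its words in only one way are just a slightly more formal presentation of the same reasoning.
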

\begin{proof}
Each window contains $d$ diamonds, by Theorem \ref{diamondicity}. Therefore, each window covers $a^d$ words. Since $w$ is a universal partial word, it must cover all $a^n$ words exactly once, so $w$ contains $\frac{a^n}{a^d}=a^{n-d}$ windows. The last window contains $n-1$ characters that do not themselves start windows, so $\vert w \vert = a^{n-d}+n-1.$ 
\end{proof}
 
Theorem \ref{diamondicity} shows that the correct parameter to consider in the non-binary case is not the number of diamonds, but rather the density of diamonds (diamondicity), since there are roughly $(d/n) \cdot a^{n-d}$ diamonds in a universal partial word over a non-binary alphabet. 

Recall that all universal words are cyclic, but not all universal partial words are. We will use the following lemma to prove that all universal partial words over non-binary alphabets are cyclic in Theorem \ref{cyclicity}.

\begin{lemma}\label{pseudocyclicity} If $w$ is a universal partial word for $A^n$ with $a \ge 3$, then the first $n-1$ characters of $w$ equal the last $n-1$ characters of $w$. In other words, as a total word over the extended alphabet $A \cup \{\diam\}$, $w$ has a border of length $n-1$.\end{lemma}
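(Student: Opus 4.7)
Write $L = a^{n-d}$, so by Corollary \ref{length} the windows of $w$ are $W_1, \ldots, W_L$ and $|w| = L + n - 1$. The goal is to show $w_k = w_{L+k}$ for every $k \in \{1, \ldots, n-1\}$, where equality is taken in the extended alphabet $A \cup \{\diam\}$.

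My plan is to argue by contradiction, mimicking the style of the proof of Theorem \ref{diamondicity}: assume $w_k \neq w_{L+k}$ for some $k$, and then construct a single total $n$-word covered twice in $w$, violating universality. Using the reversal property, it will suffice to treat two scenarios: (i) $w_k = \diam$ and $w_{L+k} = \beta \in A$, and (ii) $w_k, w_{L+k}$ are distinct letters of $A$.

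The two windows anchoring the argument are $W_k$, whose opening character is $w_k$, and $W_{L+k-n+1}$, whose closing character is $w_{L+k}$. In scenario (i), $W_k$ covers every $n$-word of the form $\alpha v$, with $\alpha$ arbitrary in $A$ and $v$ any total word covered by $w_{k+1}\cdots w_{k+n-1}$, while $W_{L+k-n+1}$ covers every $n$-word of the form $u^*\beta$ with $u^*$ any total word covered by $w_{L+k-n+1}\cdots w_{L+k-1}$. I will select a single total $(n-1)$-word $u^*$ so that $u^*\beta$ can simultaneously be written as $\alpha v$ for a suitable $v$ covered by $w_{k+1}\cdots w_{k+n-1}$, namely with $\alpha := u^*_1$ and $v := u^*_2 \cdots u^*_{n-1} \beta$. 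Then the $n$-word $u^*\beta = \alpha v$ is covered both by $W_{L+k-n+1}$ and by $W_k$, producing the required contradiction.

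Existence of a compatible $u^*$ reduces to the following constraints: for each $j \in \{2,\ldots,n-1\}$, the letter $u^*_j$ must be simultaneously consistent with $w_{L+k-n+j}$ and with $w_{k+j-1}$; and $\beta$ must be consistent with $w_{k+n-1}$. By Theorem \ref{diamondicity}, the frame of $w$ has period $n$, so these pairs of positions fall into predictable residue classes modulo $n$; whenever at least one of the pair is a diamond, the choice of $u^*_j$ is unobstructed. The assumption $a \ge 3$ provides the extra slack needed to pick $\alpha$ and fill any remaining letter slots, exactly as it did in the proof of Theorem \ref{diamondicity}.

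The main obstacle will be the residual sub-case in which $w_{L+k-n+j}$ and $w_{k+j-1}$ happen to be two distinct letters, or in which $w_{k+n-1}$ is a letter different from $\beta$: there the naive double-coverage construction fails. I anticipate handling this by combining Theorem \ref{diamondicity} with examination of several overlapping windows near either endpoint (and possibly varying $k$) to still extract a doubly-covered $n$-word. Keeping the case analysis uniform in the residue $L \bmod n$, and adapting the argument to scenario (ii) where both characters are letters and the flexibility comes entirely from the interior diamonds, is the technically demanding part.
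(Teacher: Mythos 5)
Your proposal is a plan rather than a proof, and the part you defer is exactly the content of the lemma. The reduction to two scenarios via the reversal property is fine, but in both of them your construction of a doubly-covered word requires that whenever the two corresponding positions $w_{k+j-1}$ and $w_{L+k-n+j}$ are both letters, they are equal --- and that is precisely what the lemma asserts and what you have not established. Your appeal to Theorem \ref{diamondicity} does not help here: the period-$n$ structure of the frame relates positions congruent modulo $n$, but $k+j-1$ and $L+k-n+j$ differ by $a^{n-d}-n+1$, which is not $0 \bmod n$ in general (forcing it to be would require Theorem \ref{divisibility}, whose proof already assumes pseudocyclicity --- circular). So the ``residual sub-case'' you flag is not residual; scenario (ii) and the letter-versus-letter slots in scenario (i) are the whole problem, and ``examining several overlapping windows near either endpoint'' is not an argument.

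The paper's proof avoids this position-by-position trap entirely. It fixes a total word $v$ of length $n-1$ covered by the first $n-1$ characters and counts: the $a$ words beginning with $v$ and the $a$ words ending with $v$ must each be covered exactly once, and (using Theorem \ref{diamondicity} only to relate a window's last character to the character preceding it, which \emph{are} congruent mod $n$) this forces the last $n-1$ characters of $w$ to cover $v$. Since this holds for every such $v$, and by reversal the containment goes both ways, the two partial words of length $n-1$ cover the same set of total words and hence are equal character by character. If you want to salvage your approach, you would need to replace the single-mismatch contradiction with some such global counting over all completions of a prefix/suffix; as written, the argument does not close.
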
 

\begin{proof}
Let $w$ be a universal partial word for $A^n$ with $a \ge 3$. Let $v$ be a word of length $n-1$ covered by $w_{1}\cdots w_{n-1}$. We show in two cases that the last $n-1$ characters of $w$ cover $v$.

First, if $w_n =\diam$, all $a$ words beginning with $v$ are covered by $w_{1}\cdots w_{n}$. If $v$ were covered elsewhere in $w$, except for the end of $w$, then that string and the character immediately following it would cover a word beginning with $v$ also covered by $w_{1}\cdots w_{n}$. Therefore, for $w$ to cover the $a$ words ending with $v$, the last $n-1$ characters of $w$ cover $v$.

On the other hand, if $w_n \neq\diam$, exactly one word beginning with $v$ is covered by $w_{1}\cdots w_{n}$. The remaining $a-1$ words beginning with $v$ must be covered by strings of $w$ that are not immediately followed by $\diam$, as this would duplicate $v_{1}v_{2}\cdots v_{n-1}w_{n}$. These words are covered by exactly $a-1$ other strings, $w_i\cdots w_{i+n-2}$ (for $i \in I$ with $|I| = a-1$),
that cover $v$ and are followed by a letter in $A$. By Theorem \ref{diamondicity}, these strings cannot be preceded by $\diam$, and thus the $w_{i-1}w_i\cdots w_{i+n-2}$'s
cover $a-1$ distinct words ending with $v$. If $|w|-n+2 \in I$, then one of these $a-1$ strings is at the end of $w$; i.e., the last $n-1$ characters of $w$ cover $v$. Otherwise, the remaining word ending with $v$ (out of $a$ total) must be covered by the last $n$ characters of $w$ so as not to duplicate a word starting with $v$, so the last $n-1$ characters of $w$ cover $v$.

Thus, the last $n-1$ characters of $w$ cover all words covered by the first $n-1$ characters, and by the reversal property, the first $n-1$ characters cover all words covered by the last $n-1$ characters. Hence the first $n-1$ characters equal the last $n-1$ characters.
\end{proof}

Lemma \ref{pseudocyclicity} does not guarantee that a universal partial word is cyclic, because the last $n-1$ characters might overlap with the first $n-1$ characters if the word is short enough. We call a partial word \emph{pseudocyclic} if its first and last $n-1$ characters are the same, whether or not they overlap. Pseudocyclicity is sufficient to show that the frame of $w$ has period $n$.  Chen et al.\ proved this for cyclic universal partial words (Lemma 14 in \cite{CKSpub}), but their proof techniques extend to the case of pseudocyclicity. We present an alternate proof of this result.  
\begin{prop}\label{everythingisdiamondy} If $w$ is a pseudocyclic universal partial word, then $w$ has well-defined diamondicity.\end{prop}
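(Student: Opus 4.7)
The plan is to use pseudocyclicity to pass to a cyclic word and then argue, via a covering/duplication argument, that all cyclic windows must have the same number of diamonds. Set $M := |w| - n + 1$. Pseudocyclicity gives $w_i = w_{i+M}$ for $1 \leq i \leq n-1$, which is exactly the consistency condition that allows $w$ to be extended to a cyclic partial word $\hat{w}$ of period $M$. The $M$ cyclic windows $W_i := \hat{w}_i \cdots \hat{w}_{i+n-1}$ (indices read mod $M$) are precisely the windows of $w$, and since $w$ is a universal partial word they partition $A^n$.

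Let $d_i$ denote the number of diamonds in $W_i$; the goal is to show all $d_i$ are equal. Suppose not. Since the cyclic sequence $(d_i)$ is non-constant and its consecutive differences sum cyclically to zero, some index $i$ must satisfy $d_i > d_{i+1}$. Because $W_{i+1}$ is obtained from $W_i$ by dropping $\hat{w}_i$ and appending $\hat{w}_{i+n}$, this forces $\hat{w}_i = \diam$ and $\hat{w}_{i+n} = c$ for some letter $c \in A$.

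Now fix any total word $v$ of length $n-1$ covered by the middle partial word $\hat{w}_{i+1} \cdots \hat{w}_{i+n-1}$. The window $W_i$ begins with a diamond, so it covers every word of length $n$ ending in $v$; in particular the $a$ words $xv$ for $x \in A$. The window $W_{i+1}$ ends in $c$, so it does not cover $vc'$ for any letter $c' \neq c$. Since $a \geq 2$ such a letter $c'$ exists, so $vc'$ must be covered by some other cyclic window $W_j$ with $j \not\equiv i+1 \pmod{M}$. But then $\hat{w}_j \cdots \hat{w}_{j+n-2}$ covers $v$, so the cyclic window $W_{j-1}$ covers at least one word ending in $v$; that word is also covered by $W_i$, contradicting the UPW property since $j - 1 \not\equiv i \pmod{M}$.

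The main obstacle is establishing the cyclic viewpoint carefully and ensuring that $W_{j-1}$ is genuinely distinct from $W_i$ as cyclic windows. Pseudocyclicity is essential here: for a non-pseudocyclic binary universal partial word such as $\diam\diam 0111$, the analogous ``previous'' window $W_{j-1}$ can fall off the left end of $w$ and simply not exist, which is precisely why universal partial words with ill-defined diamondicity arise in that setting.
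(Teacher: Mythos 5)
Your proof is correct and rests on the same core argument as the paper's: a window beginning with $\diam$ whose remaining $n-1$ characters cover $v$ claims all $a$ words ending in $v$, so a letter at position $i+n$ forces a second, distinct window covering some word ending in $v$, contradicting exact coverage. Your cyclic repackaging (wrapping $w$ into $\hat w$ of length $M=|w|-n+1$ and locating the bad index via the non-constant sequence $(d_i)$) is essentially a uniform reorganization of what the paper does --- citing Theorem~\ref{diamondicity} for $a\ge 3$ and, in the binary case, using pseudocyclicity to move an occurrence of $v$ from the beginning of $w$ to the end, which is exactly the wrap-around your $\hat w$ makes automatic.
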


\begin{proof}If $a \ge 3$, then $w$ has well-defined diamondicity by Theorem \ref{diamondicity}. Let $w$ be a pseudocyclic binary universal partial word with $w_i = \diam$ for some $i \in [|w|-n]$.  Let $v$ be a word in $\{0,1\}^{n-1}$ covered by $w_{i+1}\cdots w_{i+n-1}$.
	
If $w_{i+n} \neq \diam$, without loss of generality we assume $w_{i+n} = 0$. The word $v1$ must be covered in $w$, so $v$ is covered elsewhere in $w$. If $v$ is covered at the beginning of $w$, then $v$ is also covered at the end of $w$ since $w$ is pseudocyclic, and this instance of $v$ at the end of $w$ is not the same as the one covered by $w_{i+i}\cdots w_{i+n-1}$ because $|w| \ge i+n$. Therefore there is some other instance of $v$ with a character preceding it, and either the word $0v$ or the word $1v$ is covered twice in $w$. But $w$ is a universal partial word, so $w_{i+n} = \diam$.
\end{proof}

We will use a few intermediate results including the following lemma to prove Theorem \ref{cyclicity}, that non-binary universal partial words are cyclic.

\begin{lemma}\label{lem:frameshifts}For any pseudocyclic universal partial word $w$, all distinct cyclic shifts of the first window frame of $w$ appear the same number of times in $w$.
\end{lemma}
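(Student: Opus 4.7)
The plan is to reduce the lemma to a divisibility condition that follows directly from pseudocyclicity. First, by Proposition~\ref{everythingisdiamondy}, $w$ has a well-defined diamondicity $d$, so every window contains the same number of diamonds. Comparing consecutive windows (which share $n-1$ characters) gives $w_i = \diam$ if and only if $w_{i+n} = \diam$, so the frame of $w$ has period $n$ and every window frame is a cyclic shift of the first window frame $F$. Let $p$ be the smallest positive integer with $\sigma^p(F)=F$, where $\sigma$ denotes cyclic shift by one position; then $p\mid n$ and there are exactly $p$ distinct cyclic shifts $F,\sigma(F),\ldots,\sigma^{p-1}(F)$.

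Second, I would count occurrences. By Corollary~\ref{length}, $w$ has $N = a^{n-d}$ windows, and the $i$-th window frame is $\sigma^{i-1}(F)$. Hence the shift $\sigma^j(F)$ appears exactly at the windows $i\in[N]$ with $i\equiv j+1\pmod{p}$, which gives either $\lfloor N/p\rfloor$ or $\lceil N/p\rceil$ occurrences. All $p$ counts are equal if and only if $p\mid N$, so the lemma reduces to proving this divisibility.

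Finally, I would extract $p\mid N$ from pseudocyclicity. Let $s = N \bmod n$. Pseudocyclicity gives $w_{N+i}=w_i$ for $i\in[n-1]$, and combining this with the period-$n$ structure of the frame yields $F_{\pi(i)}=F_i$ for $i\in[n-1]$, where $\pi$ denotes cyclic shift by $s$ on $[1,n]$. These $n-1$ equalities correspond to $n-1$ of the $n$ edges of the functional graph of $\pi$, which is a disjoint union of $\gcd(n,s)$ cycles of equal length $n/\gcd(n,s)$. The one missing edge $\{n,\pi(n)\}$ sits inside a single cycle, so removing it leaves a connected path, and the $n-1$ constraints already force $F$ to be constant on every orbit of $\pi$; equivalently, $F$ is invariant under cyclic shift by $s$, so $p\mid s$. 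Combined with $p\mid n$, this gives $p\mid\gcd(n,s)=\gcd(n,N)$, which divides $N$, as needed. The main obstacle is this last step: I expect the graph-connectivity argument extending the $n-1$ pseudocyclic constraints to full shift-invariance of $F$ to be the one point requiring real care, while the counting reduction and divisibility bookkeeping are essentially routine.
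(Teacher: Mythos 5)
Your proof is correct, and its overall skeleton matches the paper's: every window frame is a cyclic shift of the first frame $F$, the shifts recur with period $p$ (the primitive period of $F$ under cyclic shift), and equality of the counts reduces to the divisibility $p \mid a^{n-d}$. Where you differ is in how that divisibility is obtained. The paper gets it by forward-reference to Theorem~\ref{divisibility}, whose proof converts pseudocyclicity into a border of length $n-1$, invokes Theorem~\ref{borderperiod} to conclude that $w$ (hence its frame) has period $a^{n-d}$, and then applies Lemma~\ref{cyclicshift}. You instead prove $p \mid \gcd(n, N)$ from scratch: the $n-1$ pseudocyclicity equalities $w_i = w_{N+i}$ translate into $F_{\pi(i)} = F_i$ for all but one $i$, and your orbit-connectivity argument (each cycle of $\pi$ minus at most one edge is still connected) upgrades these to full invariance of $F$ under the shift by $N \bmod n$. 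This buys you two things: the lemma becomes self-contained rather than leaning on a theorem stated later in the section, and you explicitly close the gap that pseudocyclicity only constrains $n-1$ of the $n$ frame positions --- a point the paper's chain of citations passes over (it can also be closed by noting that two cyclic shifts of the same frame agreeing in $n-1$ positions must agree in the last, since they contain equally many diamonds). The paper's route is shorter given that Theorem~\ref{divisibility} is needed anyway; yours is the more careful standalone argument.
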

\begin{proof}Let $f=f_1$ be the first window frame of $w$. Let $\{f_i\}_{i=1}^\infty$ be the repeating series of cyclic shifts of $f$, so $f_i$ is the result of shifting $f_1$ to the left, cyclically, $i-1$ times. Then the $i$th window frame of $w$ is $f_i$ for $i \in [a^{n-d}]$, since $a^{n-d}$ is the number of windows in $w$. Let $f'$ be the shortest word such that $f = (f')^s$ for some $s \in \mathbb N$, and let $m = |f'|$. Then $f_i = f_j$ if and only if $i \equiv j \mod m$. By pseudocyclicity, the first and last $n-1$ characters of $w$ have the same frame. Thus shifting the last window frame of $w$, $f_{a^{n-d}}$, one more time yields $f_{a^{n-d}+1} = f_1$, restarting the cycle, so no new cyclic shifts of $f$ can appear in $\{f_i\}_{i=a^{n-d}+1}^\infty$. That is, all of the distinct cyclic shifts of $f$ appear as window frames in $w$, and there are $m$ of them, $f_1, f_2, \ldots, f_m$, by the minimality of $m$. Furthermore, since the first window frame in $w$ is $f_1$ and the last is $f_{a^{n-d}}=f_m$ (note $m \equiv a^{n-d} \mod m$ by Theorem \ref{divisibility}), the cycle $f_1, f_2, \ldots, f_m$ of all the distinct cyclic shifts of $f_1$ is repeated some integer number of times in $w$.
\end{proof}

Next, we will use this lemma to prove a bound on diamondicity. By Corollary \ref{length}, the higher the diamondicity of a word, the shorter the word. However, as diamondicity increases, it becomes harder to avoid covering words multiple times. In fact, it is possible to bound the potential diamondicities of a universal partial word in terms of $n$ as shown in the following proposition.

\begin{prop}\label{diambound}
For every $k$, if $n \geq k(k-1)+2$, then there does not exist a pseudocyclic universal partial word for $A^n$ with diamondicity $d \ge n-k$. In particular, if $w$ is a pseudocyclic universal partial word for $A^n$
, then $d < n-\sqrt{n-\frac{7}{4}} - \frac{1}{2}$.
\end{prop}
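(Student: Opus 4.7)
The plan is to argue by contradiction: suppose $w$ is a pseudocyclic UPW for $A^n$ with diamondicity $d \ge n-k$ while $n \ge k(k-1)+2$, and produce a word in $A^n$ covered twice by $w$. Set $k' = n - d$; we have $1 \le k' \le k$, implicitly excluding the trivial UPW $\diam^n$ (the pathological $k'=0$ case with only one window). By Proposition \ref{everythingisdiamondy} and Theorem \ref{diamondicity}, the frame of $w$ has period $n$, so the letter positions of $w$ are exactly those $j$ with $(j-1) \bmod n \in L$ for a fixed set $L \subseteq \mathbb{Z}/n\mathbb{Z}$ of size $k'$. By Corollary \ref{length}, $w$ has $a^{k'}$ windows. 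For each window $W_i$, the set of letter positions $L_i \subseteq \{1,\dots,n\}$ inside $W_i$ is a cyclic shift of $L$, and one directly checks that $L_{i+\delta} = L_i - \delta \pmod n$.

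The crux of the argument is to find a shift $\delta$ with $L_1 \cap L_{1+\delta} = \emptyset$, which holds precisely when $\delta$ is not a nonzero pairwise difference of $L$ modulo $n$. The set of such forbidden differences has cardinality at most $k'(k'-1) \le k(k-1) \le n - 2$, so by pigeonhole some $\delta \in \{1,\dots,n-1\}$ avoids them---this is exactly where the hypothesis $n \ge k(k-1)+2$ is used. A secondary requirement is $\delta \le a^{k'} - 1$, so that $W_{1+\delta}$ is still a valid window of $w$. I expect this bookkeeping to be the main technical obstacle: when $a^{k'} < n$, one must confirm that the range $\{1,\dots,a^{k'}-1\}$ still contains some $\delta$ outside the forbidden differences, which reduces to the elementary inequality $a^{k'} \ge k'(k'-1)+2$ for $a \ge 2$, $k' \ge 1$, checkable directly for small $k'$ and by easy induction thereafter.

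Given such a $\delta$, any word $u \in A^n$ covered by both $W_1$ and $W_{1+\delta}$ must satisfy $u_p = w_p$ for all $p \in L_1$ and $u_{p'} = w_{p'+\delta}$ for all $p' \in L_{1+\delta}$. Since $L_1 \cap L_{1+\delta} = \emptyset$, these $2k'$ equations pin down distinct coordinates and are automatically consistent, leaving $n - 2k'$ coordinates of $u$ free. The bound $n \ge k(k-1)+2 \ge 2k \ge 2k'$ (using $(k-1)(k-2) \ge 0$) ensures $n - 2k' \ge 0$, so at least $a^{n-2k'} \ge 1$ words are simultaneously covered by $W_1$ and $W_{1+\delta}$, contradicting the UPW property. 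For the ``in particular'' consequence, take $k = n - d$ and apply the contrapositive to obtain $n < (n-d)(n-d-1) + 2$; completing the square in $n-d$ yields $d < n - \frac{1}{2} - \sqrt{n - \frac{7}{4}}$.
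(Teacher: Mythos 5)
Your proof is correct and follows essentially the same route as the paper's: both locate, via pigeonhole on the at most $k(k-1)$ cyclic differences between letter positions of the window frame, a shifted frame sharing no letter positions with the first, so that the two corresponding windows cover a common word. The only real divergence is how you guarantee the shifted window actually exists in $w$ --- you verify the elementary inequality $a^{k'} \ge k'(k'-1)+2$ directly, whereas the paper routes this through Lemma~\ref{lem:frameshifts} and Theorem~\ref{divisibility} --- and your substitution $k' = n-d$ handles the case $d > n-k$ somewhat more cleanly than the paper's closing remark.
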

\begin{proof} For a given $k$ and given $n \ge k(k-1)+2$, suppose $w$ is a universal partial word with diamondicity $n-k$. Consider the first window frame $f_1$ of $w$. Let $x_1,x_2,\dots, x_k \in [n]$ be the positions of the $\_$'s in $f_1$.

There are at most $k(k-1)$
distinct distances between pairs $x_r < x_s$, since each pair has distances $\ell = x_s - x_r$ and $n-\ell$ between $x_r$ and $x_s$ cyclically. Let $\mathcal{L} \subseteq [n-1]$ be the set of these cyclic distances between $x$'s. 

We will use the notation $\{f_i\}_{i=1}^\infty$ for the cyclic shifts of the first frame $f_1$ of $w$ as in Lemma \ref{lem:frameshifts}.
For $i \in [n]$, if $f_i$ has $\_$ in position $x_r$ for some $r$, then there is an $s$ such that $x_r + (i-1) \equiv x_s \mod n$, so $i-1 \in \mathcal{L}$.

Since $n \geq k(k-1)+2$, we have $n-1 > |\mathcal{L}|$, so there is at least one $i$, say $i'$, such that $i'-1 \in [n-1]$ and $i'-1 \notin \mathcal{L}$. For this $i' \in [n]$, $f_{i'}$ shares no $\_$ positions with $f_1$.

Let $j \in [m]$ and $j \equiv i' \mod m$. Then $f_{i'} = f_j$, and $w$ has at least $j$ window frames, so by Lemma \ref{lem:frameshifts} the $j$th window frame exists and shares no $\_$ positions with $f_1$. The first and $j$th windows both cover some word of length $n$ since at every position at least one of them has a $\diam$. This contradicts that $w$ is a universal partial word.

For given $n$ and $k$, increasing the diamondicity to $d > n-k$ can only increase the number of window frames that share no $\_$ positions with $f$. Thus if $d \ge n-k$, $w$ is not a universal partial word.

To show that if $w$ is a universal partial word for $A^n$, then $d < n-\sqrt{n-\frac{7}{4}} - \frac{1}{2}$, let $k = \sqrt{n-\frac74}+\frac12$. Solving this equation for $n$, we obtain $n = k(k-1)+ 2$. Applying the first part of the theorem, we have $d < n - k = n - \sqrt{n-\frac74}-\frac12$.
\end{proof}

For $n=4$, Proposition \ref{diambound} shows every nontrivial non-binary universal partial word has diamondicity $d = 1$. In addition, Theorem 19 from \cite{CKSpub} also follows as a quick corollary.

This proof method is insufficient for giving a fractional bound on diamondicity, since it is possible to construct window frames for a given density for large enough $n$ that do not clearly cover any words twice.

While we suspect a stronger diamondicity bound is possible, this one is sufficient to show that all nontrivial universal partial words over non-binary alphabets are cyclic in the classical sense.

\begin{theorem}\label{cyclicity}If $w$ is a nontrivial, non-binary universal partial word for $A^n$, then $w$ is cyclic.\end{theorem}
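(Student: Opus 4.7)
The plan is to decompose cyclicity of $w$ into its two defining ingredients: (i) the first and last $n-1$ characters of $w$ agree, and (ii) those two blocks of length $n-1$ do not overlap, i.e., $|w|\geq 2(n-1)$. Part (i) is exactly Lemma \ref{pseudocyclicity}, which applies since $a\geq 3$, so the real content is verifying part (ii).

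To establish (ii), I would first apply Theorem \ref{diamondicity} to endow $w$ with a well-defined diamondicity $d$, and then use Corollary \ref{length} to rewrite $|w|=a^{n-d}+n-1$. In this form, (ii) becomes the purely arithmetic inequality $a^{n-d}\geq n-1$.

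To force $n-d$ to be large, I would invoke Proposition \ref{diambound}, which since $w$ is pseudocyclic (by Lemma \ref{pseudocyclicity}) yields $n-d>\sqrt{n-\tfrac{7}{4}}+\tfrac{1}{2}$. Because $w$ is nontrivial, $n-d\geq 1$, so $n-d-\tfrac{1}{2}>0$ and squaring preserves the inequality, giving $(n-d)(n-d-1)>n-2$; since the left-hand side is an integer, this sharpens to $(n-d)(n-d-1)\geq n-1$. Combining with the elementary bound $3^k\geq k(k-1)$ for all $k\geq 1$ (a routine induction) then closes the chain
\[
a^{n-d}\;\geq\; 3^{n-d}\;\geq\; (n-d)(n-d-1)\;\geq\; n-1,
\]
so $|w|\geq 2(n-1)$ as required, and $w$ is cyclic.

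The main obstacle was already surmounted in Proposition \ref{diambound}; the only delicate step in the present argument is the passage from its strict real inequality to the integer inequality $(n-d)(n-d-1)\geq n-1$, which is why one must first check that $n-d-\tfrac{1}{2}$ is positive before squaring. Everything else is an assembly of previously established machinery, and the exponential growth of $a^{n-d}$ against the polynomial bound $(n-d)(n-d-1)$ provides comfortable slack.
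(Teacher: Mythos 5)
Your proposal is correct and follows essentially the same route as the paper: Lemma \ref{pseudocyclicity} for the matching of the first and last $n-1$ characters, then Corollary \ref{length} and Proposition \ref{diambound} to show $|w| \geq 2(n-1)$ so the two blocks cannot overlap. The only difference is cosmetic: the paper bounds $a^{n-d} > a^{\sqrt{n-7/4}+1/2} > n-1$ directly using $a \geq 3$ and $n \geq 4$, while you pass to the integer inequality $(n-d)(n-d-1) \geq n-1$ and invoke $3^k \geq k(k-1)$; both are valid.
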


\begin{proof}By Lemma \ref{pseudocyclicity}, the first and last $n-1$ characters of $w$ are the same. We need only ensure that $w$ is long enough to prevent these first and last $n-1$ characters from overlapping.

By Proposition \ref{diambound}, we have that $d < n - \sqrt{n - \frac{7}{4}}-\frac{1}{2}$. Therefore
$$
|w| = a^{n-d} + n - 1 > a^{\sqrt{n - 7/4} + 1/2} + n - 1 > 2n-2
$$
since $a \geq 3$ by hypothesis and $n \ge 4$ by Proposition \ref{prop:smallwordsafterall}. Thus the first and last $n-1$ characters of $w$ do not overlap. \end{proof}

Words over binary alphabets containing a single $\diam$ or a single string of consecutive $\diam$'s were studied in \cite{CKSpub}. In fact, over non-binary alphabets, such words do not exist. The following is a non-binary analogue and extension of Theorem \ref{kmiddlediamonds}.

\begin{prop}\label{nbkdiam}For $a \geq 3$, there does not exist a universal partial word $w = u \diam^k v$ where $u$ and $v$ are (possibly empty) words and $1 \leq k \le n-1$.\end{prop}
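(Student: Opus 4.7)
My plan is to combine the rigidity from Theorem \ref{diamondicity} with the length equality in Corollary \ref{length} to derive a numerical contradiction. The diamonds of $w = u\diam^k v$ occupy the $k$ consecutive positions $|u|+1, \ldots, |u|+k$, hence $k$ distinct residues modulo $n$ (because $k \leq n-1$), and Theorem \ref{diamondicity} then forces every position of $w$ in any of those residue classes to be a diamond.

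The first step is to use this periodicity to bound $|u|$ and $|v|$. If $|u| \geq n-k+1$, then the position $|u|+k-n$ lies in $[1,|u|]$, so it sits inside the total word $u$ and is therefore not a diamond; yet it belongs to the same residue class modulo $n$ as the diamond $w_{|u|+k}$, contradicting Theorem \ref{diamondicity}. Hence $|u| \leq n-k$, and the reversal property (the reverse of $w$ has the form $\tilde v\,\diam^k\,\tilde u$ with $\tilde u, \tilde v$ total words) gives $|v| \leq n-k$ by the same argument. Because $|u|+k \leq n$, all $k$ diamonds of $w$ lie inside the first window, so the diamondicity of $w$ is exactly $k$.

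The second step is a numerical collision. On one hand, $|w| = |u|+k+|v| \leq 2(n-k)+k = 2n-k$; on the other hand, Corollary \ref{length} gives $|w| = a^{n-k}+n-1$. Combining these yields $a^{n-k} \leq n-k+1$. Setting $m = n-k \geq 1$, this becomes $a^m \leq m+1$, which is impossible for every $a \geq 3$ and $m \geq 1$, since $a^m \geq 3^m > m+1$.

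The main delicate point is the ordering of the argument rather than any single hard calculation. Corollary \ref{length} requires knowing the diamondicity, and verifying that the diamondicity equals $k$ needs the bound $|u| \leq n-k$ so that the whole block of diamonds fits into the first window. Once Theorem \ref{diamondicity} is applied to obtain the length bounds on $|u|$ and $|v|$ before invoking Corollary \ref{length}, the remainder is a one-line exponential-versus-linear comparison.
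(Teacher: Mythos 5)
Your proposal is correct and follows essentially the same route as the paper: apply Theorem \ref{diamondicity} to get $|u|,|v|\le n-k$, invoke Corollary \ref{length} with diamondicity $k$, and derive the contradiction $a^{n-k}\le n-k+1$. The only difference is that you spell out the residue-class argument behind $|u|,|v|\le n-k$, which the paper leaves implicit.
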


\begin{proof}
For $a \geq 3$, let $w = u\diam^k v$ where $u$ and $v$ are words. By Theorem \ref{diamondicity}, we know that $|u|, |v| \leq n-k$. By Corollary \ref{length}, we know that $|w| = a^{n-k} + n - 1$. Therefore
\begin{align*}
a^{n-k} + n - 1 &= |u| + |v| + k \leq n-k + n-k + k = 2n - k
\end{align*}
and so $a^{n-k} \leq n - k + 1$. This is a contradiction when $a \geq 3$ and $k \le n-1$.
\end{proof}

Note that for $k=0$ and $k=n$, there are trivial universal partial words of this form. Theorem 5 in \cite{CKSpub} also proves the $k=1$ case.

We would like to be able to show the nonexistence of universal partial words based only on the parameters $a,n,$ and $d$. We will take advantage of the cyclic nature of the window frames of a partial word. The following lemma applies to any word but will be used in the context of frames.

\begin{lemma}\label{cyclicshift}
Cyclically shifting a word $f$ $i$ times yields $f$ if and only if there is a word $f'$ such that $f = (f')^s$ for some $s \in \mathbb N$, where $|f'| = i$.
\end{lemma}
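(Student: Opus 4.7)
The plan is to prove the biconditional by handling each direction separately. The backward direction is essentially a one-line check: if $f = (f')^s$ with $|f'| = i$, then the cyclic shift by $i$ positions permutes the $s$ consecutive copies of $f'$ among themselves, so $(f')^s = f$ is fixed.

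For the forward direction, I would set $n = |f|$ and reduce to the case $1 \le i \le n$, since cyclic shifts depend only on $i \pmod n$. I would then define the candidate $f' := f_1 f_2 \cdots f_i$ and use the hypothesis $f_j = f_{((j+i-1) \bmod n) + 1}$ for every $j$, applied to $j = 1, 2, \ldots, i$ and iterated, to show that each successive block $f_{ki+1} \cdots f_{(k+1)i}$ equals $f'$ for $k = 0, 1, \ldots, n/i - 1$. These blocks tile $f$ exactly when $i \mid n$, yielding $f = (f')^{n/i}$, so one takes $s = n/i$.

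The main obstacle is the hidden requirement that $i$ divide $n$, which is necessary for $f$ to be an integer power of a length-$i$ word. Without it, a shift by $i$ can still fix $f$ while forcing $|f'|$ to be a proper divisor of $i$; for example, the constant word $aaaa$ is fixed by the shift of $3$ positions but is not a power of any length-$3$ word. In the intended use within Lemma~\ref{lem:frameshifts}, the relevant $i$ arises as (a multiple of) the length of the shortest repeating block of $f$ and therefore does divide $n$, so this subtlety does not affect the application; I would either add the divisibility hypothesis to the statement or flag that it is implicit from the context in which the lemma is invoked.
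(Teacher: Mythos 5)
Your argument follows essentially the same route as the paper's: the backward direction is the same one-line block-permutation check, and in the forward direction both you and the paper take $f' = f_1\cdots f_i$ and exploit the period-$i$ relation $f_j = f_{i+j \pmod{n}}$. The substantive difference is that you have correctly spotted that the forward direction is false as stated when $i \nmid |f|$: your counterexample $f = aaaa$, $i = 3$ is valid, since the shift by $3$ fixes $f$ but $f$ is not a power of any length-$3$ word. The paper's own proof glosses over exactly this point --- it asserts that $f$ ``has period $i$ \dots and begins and ends with $f'$, so $i \mid n$,'' but that implication does not hold (your example has period $3$ and begins and ends with $aaa$, yet $3 \nmid 4$). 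So your proof is correct once the divisibility hypothesis is added, and you have identified a genuine, though repairable, error in the lemma.

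Two caveats about your proposed repair. First, the lemma is actually invoked in Theorem \ref{divisibility} rather than in Lemma \ref{lem:frameshifts}, and there the forward direction is applied to the residue $r = a^{n-d} \bmod i$, which need not divide $n$; simply adding the hypothesis $i \mid |f|$ would therefore leave that application uncovered. The cleaner fix, which you already gesture at when you observe that a shift by $i$ can force $|f'|$ to be a proper divisor of $i$, is to weaken the conclusion of the forward direction to $|f'| = \gcd(i, |f|)$: the set of shift amounts fixing $f$ is a subgroup of $\mathbb{Z}_{|f|}$, so if the shift by $i$ fixes $f$ then so does the shift by $\gcd(i,|f|)$, which does divide $|f|$ and to which your block-tiling argument applies verbatim. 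This corrected version still delivers the contradiction with the minimality of $i$ needed in Theorem \ref{divisibility}, since $\gcd(r, n) \le r < i$.
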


\begin{proof}
If $f = (f')^s$ for some $s \in \mathbb N$, where $|f'| = i$, then cyclically shifting $f = (f') (f')^{s-1}$ $i$ times yields $(f')^{s-1} (f') = (f')^s = f$. For the reverse direction, suppose cyclically shifting $f=f_1f_2\cdots f_n$ $i$ times yields $f$. Then $f_1 f_2 \cdots f_n = f_{i+1} f_{i+2} \cdots f_{n} f_1 f_2 \cdots f_i$. Let $f' = f_1 f_2 \cdots f_i$. Note $f$ has period $i$ (since $f_j = f_{i+j \pmod n}$ for all $j \in [n]$) and begins and ends with $f'$, so $i | n$ and $f = (f')^{n/i}$, where $n/i \in \mathbb N$.
\end{proof}

Using Lemma \ref{cyclicshift} and Theorem \ref{borderperiod}, we can prove the following theorem, which by Theorem \ref{cyclicity} applies to all non-binary universal partial words.
 
\begin{theorem}\label{divisibility} Let $w$ be a pseudocyclic universal partial word for $A^n$. If $f$ is the first window frame of $w$, and $i$ is the length of the shortest frame $f'$ such that $f = (f')^s$ for some $s \in \mathbb N$, then $i \big | \gcd(a^{n-d},n)$.
\end{theorem}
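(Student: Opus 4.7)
The plan is to prove the two divisibility statements $i \mid n$ and $i \mid a^{n-d}$ separately, and then combine them. Both divisibilities should follow almost immediately from earlier machinery, so I expect the proof to be short; the main work has already been done in Lemma \ref{lem:frameshifts} and Lemma \ref{cyclicshift}.

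First I would handle $i \mid n$. By definition $f = (f')^s$ with $|f'| = i$ and $|f| = n$ (since $f$ is a window frame of length $n$), so $n = s \cdot i$, and thus $i \mid n$. This step is essentially a restatement of the definition of $i$.

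Next I would handle $i \mid a^{n-d}$. Using the notation $\{f_j\}_{j=1}^\infty$ for the cyclic shifts of $f_1 = f$, Lemma \ref{cyclicshift} tells us that $f_j = f_1$ if and only if $i \mid j-1$, and more generally $f_j = f_k$ iff $j \equiv k \pmod i$. So there are exactly $i$ distinct cyclic shifts of $f$, namely $f_1, \ldots, f_i$. By Corollary \ref{length}, $w$ has exactly $a^{n-d}$ windows, hence $a^{n-d}$ window frames, read in the order $f_1, f_2, \ldots, f_{a^{n-d}}$. Lemma \ref{lem:frameshifts} (or equivalently, pseudocyclicity applied at both ends) implies that this sequence of window frames consists of an integer number of complete cycles through $f_1, \ldots, f_i$; in particular $i \mid a^{n-d}$.

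Combining the two divisibilities yields $i \mid \gcd(a^{n-d}, n)$, as desired. The only subtle point — and the one that needs a careful citation rather than a calculation — is that $i$ really equals the number $m$ appearing in Lemma \ref{lem:frameshifts}; this is precisely the content of Lemma \ref{cyclicshift}, which identifies the minimal period of a word under cyclic shifting with the length of the shortest root $f'$. With that identification in hand, no further computation is required.
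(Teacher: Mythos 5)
Your division into $i \mid n$ and $i \mid a^{n-d}$ is exactly the paper's decomposition, and the first half is identical to the paper's one-line argument. The problem is in the second half: you derive $i \mid a^{n-d}$ from Lemma \ref{lem:frameshifts}, but in this paper that lemma is not available for this purpose. The proof of Lemma \ref{lem:frameshifts} justifies precisely the claim you need --- that the window frames of $w$ run through an integer number of complete cycles of $f_1,\ldots,f_m$ --- by citing Theorem \ref{divisibility} itself (``note $m \equiv a^{n-d} \bmod m$ by Theorem \ref{divisibility}''). So as written, your argument for $i \mid a^{n-d}$ is circular within the paper's logical structure.

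The repair is the argument you relegate to a parenthetical (``pseudocyclicity applied at both ends''), and it is what the paper actually does: pseudocyclicity says $w$, viewed as a total word over $A \cup \{\diam\}$, has a border of length $n-1$, so by Theorem \ref{borderperiod} it --- and hence its frame --- has period $|w|-(n-1) = a^{n-d}$. Therefore cyclically shifting the first window frame $f$ by $a^{n-d}$ returns $f$. Writing $a^{n-d} = qi + r$ with $0 \le r < i$ and using that shifting by $i$ fixes $f$ (the easy direction of Lemma \ref{cyclicshift}), shifting by $r$ also fixes $f$; if $r \ge 1$ this contradicts the minimality of $i$ via the other direction of Lemma \ref{cyclicshift}. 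Hence $r = 0$ and $i \mid a^{n-d}$. If you make this the primary argument rather than an aside, your proof coincides with the paper's and is complete; Lemma \ref{lem:frameshifts} and Corollary \ref{length} are then not needed at all for this theorem.
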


\begin{proof}First, we note that $si = |f| = n$, so $i \big | n$.

Next, we will show that $i \big | a^{n-d}$.

The length of $w$ is $N = a^{n-d} + n-1$. By pseudocyclicity and Theorem \ref{borderperiod}, $w$ considered as a word over $A\cup\{\diam\}$ has period $N - (n-1) = a^{n-d}$. In particular, the frame of $w$ has period $a^{n-d}$, so cyclically shifting the first window frame of $w$, $f$, $a^{n-d}$ times yields the same frame.

Let $r = a^{n-d} \pmod{i}$, so $r \in \{0, 1, \ldots, i-1\}$. If $r \in [i-1]$, then by Lemma \ref{cyclicshift} and the minimality of $i$, we can conclude that if we shift $f$ $r$ times, then we do not get the same frame. But shifting $f$ $r$ times must yield the same frame because by Lemma \ref{cyclicshift}, it is equivalent to shifting $f$ $a^{n-d}$ times, which yields the same frame. Therefore $r = 0$, i.e. $i | a^{n-d}$.\end{proof}

This gives rise to some immediate number-theoretic corollaries which allow us to eliminate many combinations of $a,n,$ and $d$. For example, if $\gcd(a,n)=1$, then there are no nontrivial pseudocyclic universal partial words for $A^n$ (Corollary 16 from \cite{CKSpub}). 

\begin{corollary} If $\gcd(a^{n-d},n)=2$, then there are no nontrivial universal partial words for $A^n$ with diamondicity $d$.\end{corollary}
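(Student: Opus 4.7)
The plan is to invoke Theorem \ref{divisibility} to force the first window frame of $w$ into one of two rigid shapes, then exclude each in turn. I will assume for contradiction that $w$ is a nontrivial universal partial word for $A^n$ with diamondicity $d$. The hypothesis $\gcd(a^{n-d},n)=2$ forces $a$ to be even, and since the corollary sits in the non-binary section we in fact have $a \ge 4$, so $w$ is pseudocyclic by Theorem \ref{cyclicity}. Let $f$ be the first window frame and write $f = (f')^s$ with $f'$ the shortest such word, setting $i = |f'|$. Theorem \ref{divisibility} gives $i \mid \gcd(a^{n-d},n) = 2$, so $i \in \{1, 2\}$.

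The case $i = 1$ is immediate: $f'$ is a single character, so $f$ consists of $n$ copies of $\_$ or $n$ copies of $\diam$. Hence $w$ is either a total universal word (so $d = 0$) or the all-diamond word $\diam^n$ (so $d = n$); both outcomes are trivial, contradicting our assumption on $w$.

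The case $i = 2$ is the heart of the argument. Minimality of $i$ rules out $f' \in \{\_\_, \diam\diam\}$, so $f' \in \{\_\diam, \diam\_\}$. This forces $d = n/2$ (so $n$ is even) and an alternating letter-diamond frame for $w$. Since $a^{n-d} = a^{n/2} \ge 2$, there exist two consecutive windows $W_j$ and $W_{j+1}$ of $w$, and by alternation their frames are $\_\diam\_\diam\cdots\_\diam$ and $\diam\_\diam\_\cdots\diam\_$ in some order. The $\_$-positions of $W_j$ and the $\_$-positions of $W_{j+1}$ then partition the $n$ coordinates $\{1,\ldots,n\}$; I would define $x = x_1 x_2 \cdots x_n \in A^n$ by setting $x_\ell$ to be the letter of $w$ sitting at position $\ell$ of whichever of $W_j$, $W_{j+1}$ has a $\_$ at that position. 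This is unambiguous because each position $\ell$ is a $\_$-position for exactly one of the two windows, and by construction $x$ is covered by both $W_j$ and $W_{j+1}$, contradicting that $w$ is a universal partial word.

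The main obstacle I anticipate is verifying this last construction in the $i = 2$ case: one must check, by carefully indexing the two windows, that the $\_$-positions of $W_j$ and of $W_{j+1}$ really do partition $\{1, \ldots, n\}$, so that $x$ is well-defined and genuinely covered by both windows. Everything else — applying Theorem \ref{divisibility} to pin down $i \in \{1,2\}$, and handling the trivial $i = 1$ subcase — is routine.
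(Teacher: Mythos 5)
Your proof is correct and takes essentially the same route as the paper's: invoke Theorem \ref{divisibility} to get $i \mid 2$, dismiss $i=1$ as giving only trivial words, and for $i=2$ note that two consecutive windows have complementary alternating frames and therefore both cover a common word of $A^n$. The word your partition construction produces is exactly the word $v_1v_1v_2v_2\cdots v_{n/2}v_{n/2}$ that the paper exhibits explicitly, so the difference is purely presentational.
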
 

\begin{proof} If $i$ from Theorem \ref{divisibility} divides $2$ and $f$ contains both letters and $\diam$s, then $f'=\diam \_$ or $f' = \_ \diam$. Without loss of generality, let $f=\diam \_$. Let $v_1, v_2, \ldots, v_{n/2}$ be the letters of $f$. Then $w$ covers $v_1v_1v_2v_2\ldots v_{n/2}v_{n/2}$ twice.
\end{proof}

In particular, pseudocyclic binary universal partial words require that $n$ be a multiple of 4.

\begin{corollary}For $a\geq 3$, if $\gcd(a^{n-d},n) = p$ for some prime $p$ and there exists a universal partial word for these values of $a, n$, and $d$, then $d \in \{kn/p : k \in [p-1]\}$. \end{corollary}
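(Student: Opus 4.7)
The plan is to deduce this corollary as a short consequence of Theorem~\ref{divisibility}, with the primality of $p$ doing all the work. First I would invoke Theorem~\ref{cyclicity} (applicable since $a \geq 3$ and we are in the nontrivial setting) to conclude that $w$ is cyclic, hence pseudocyclic, so that Theorem~\ref{divisibility} applies. Letting $f$ be the first window frame of $w$ and $f'$ be the shortest frame satisfying $f = (f')^s$ for some $s \in \mathbb{N}$, the theorem gives that $|f'|$ divides $\gcd(a^{n-d},n) = p$. Because $p$ is prime, this forces $|f'| \in \{1, p\}$.

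Next I would rule out $|f'| = 1$: a length-one frame is either $\_$ or $\diam$, so $f = (f')^n$ would be a constant frame, forcing $d = 0$ or $d = n$, both excluded by nontriviality. Hence $|f'| = p$ and $f = (f')^{n/p}$. Writing $k$ for the number of $\diam$'s appearing in $f'$, the total number of $\diam$'s in $f$ is $(n/p)k$, so $d = kn/p$. Once more, $k = 0$ and $k = p$ are excluded by nontriviality (they would return $d=0$ and $d=n$, respectively), leaving $k \in \{1, 2, \dots, p-1\} = [p-1]$, which yields the claimed form of $d$.

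There is no serious obstacle here: the substantive work is already packaged inside Theorem~\ref{divisibility}, and everything else is a short case analysis driven by the primality of $p$. The only subtlety worth flagging is that the statement should be understood in the nontrivial regime, since the $d=0$ trivial universal word is automatically cyclic and could otherwise appear as a spurious exception.
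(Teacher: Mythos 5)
Your proposal is correct and follows essentially the same route as the paper's proof: apply Theorem~\ref{divisibility} to get $|f'| \in \{1,p\}$, rule out $|f'|=1$ by nontriviality, and count the diamonds $d'$ in $f'$ to get $d = d'n/p$ with $d' \in [p-1]$. Your extra remarks — explicitly citing Theorem~\ref{cyclicity} to justify pseudocyclicity, and flagging that the statement is implicitly restricted to nontrivial words — are both accurate and match what the paper leaves tacit.
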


\begin{proof} Assume $w$ is a universal partial word for $A^n$ with diamondicity $d$. Let $f$ be the first window frame of $w$, and let $f'$ be the shortest word such that $f=(f')^s$ for some $s\in \mathbb N$. By Theorem \ref{divisibility}, $|f'| = 1$ or $p$. Assuming $w$ is nontrivial, we have $|f'| = p$. Then, $n = ps$. Let $d'$ be the number of diamonds in $f'$, so $d = d's = d'n/p$. Note $d' \in [p-1]$ as $f'$ must have at least one letter and at least one diamond for $w$ to be nontrivial.\end{proof}

\section{Construction of a Universal Partial Word}\label{construction}
Given the results of Section~\ref{structure}, it is tempting to believe that universal partial words may not exist for non-binary alphabets. This is in fact the case for small $n$.
\begin{prop}\label{prop:smallwordsafterall}
For $a\geq 3$, there does not exist a nontrivial universal partial word $w$ for $n\leq 3$.
\end{prop}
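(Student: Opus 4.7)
The plan is to treat the cases $n \in \{1, 2, 3\}$ separately. For $n = 1$, any partial word containing a $\diam$ already covers every letter via the single-$\diam$ window, so if $w$ also contains any letter $c$ then $c$ is covered twice; hence the only UPWs are $w = \diam$ itself and the permutations of $A$, all of which are trivial. For $n = 2$, Theorem \ref{diamondicity} forces the frame to have period $2$, so a nontrivial UPW has $d = 1$ and a strictly alternating frame, producing two adjacent windows of the forms $\diam x$ and $x\diam$; both cover the word $xx$, a contradiction.

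For $n = 3$, Proposition \ref{diambound} with $k = 1$ gives $d \leq 1$, so nontrivially $d = 1$ and $|w| = a^2 + 2$ by Corollary \ref{length}. Theorem \ref{divisibility} then says the shortest period $i$ of the frame divides $\gcd(a^2, 3)$; since $i = 1$ would force the frame to be constant and hence incompatible with $d = 1$, we need $i = 3$, which requires $3 \mid a$. This finishes the case $3 \nmid a$.

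For the remaining case $3 \mid a$, since $|w| = a^2 + 2 \geq 11 > 2(n-1)$, Lemma \ref{pseudocyclicity} gives matching and non-overlapping $(n{-}1)$-borders, so $w$ may be read cyclically as a word of length $a^2$ with $\diam$'s at every third position. Labelling positions so that the $\diam$'s sit at residues $\equiv 1 \pmod 3$ and writing $\alpha_k, \beta_k$ for the two letters immediately following the $k$-th $\diam$, we obtain $a^2/3$ blocks $(\alpha_k, \beta_k)$. The window at the $k$-th $\diam$ reads $\diam \alpha_k \beta_k$ and covers $\{z\alpha_k\beta_k : z \in A\}$, while the very next window reads $\alpha_k \beta_k \diam$ and covers $\{\alpha_k\beta_k z : z \in A\}$; more generally, a window $\diam \alpha_k \beta_k$ from block $k$ and a window $\alpha_j \beta_j \diam$ from block $j$ share a covered word precisely when $\alpha_k = \beta_j$. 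Universality therefore forces $S := \{\alpha_k : k\}$ and $T := \{\beta_k : k\}$ to be disjoint subsets of $A$, so $|S| + |T| \leq a$, while distinct windows of the form $\diam \alpha \beta$ must carry distinct pairs $(\alpha, \beta)$, so the $a^2/3$ pairs $(\alpha_k, \beta_k) \in S \times T$ are pairwise distinct. AM--GM then yields $a^2/3 \leq |S| \cdot |T| \leq a^2/4$, a contradiction.

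The main obstacle is the $3 \mid a$ subcase of $n = 3$: the divisibility and diamondicity tools from Section \ref{structure} narrow the possibilities but do not eliminate it outright. The resolution rests on the concrete observation that at each $\diam$ two consecutive windows read $\diam \alpha \beta$ and $\alpha \beta \diam$; comparing how pairs of such windows from different blocks overlap forces the ``first-letter-after-$\diam$'' alphabet $S$ and ``second-letter-after-$\diam$'' alphabet $T$ to be disjoint in $A$, after which AM--GM closes the argument.
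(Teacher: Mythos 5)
Your proposal is correct, but your treatment of the main case $n=3$ takes a genuinely different route from the paper's. The paper argues locally and elementarily: by Theorem~\ref{diamondicity} the window frame is a cyclic shift of $\diam \_ \_$ or $\diam\diam\_$; in the first case it tracks where the word $000$ can be covered, showing $w$ would have to contain $\diam x0\diam0y\diam$ and hence cover $x0y$ twice, and in the second case it checks that any five consecutive characters already force a repeated word. You instead invoke the heavier machinery of Section~\ref{structure}: Proposition~\ref{diambound} with $k=1$ to cap the diamondicity at $1$, Theorem~\ref{divisibility} to dispose of $3\nmid a$ outright, and then, for $3\mid a$, a global counting argument in the cyclic word --- the ``first letter after $\diam$'' set $S$ and ``second letter after $\diam$'' set $T$ must be disjoint, the $a^2/3$ pairs $(\alpha_k,\beta_k)\in S\times T$ must be distinct, and $a^2/3\le |S|\cdot|T|\le a^2/4$ is absurd. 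I verified the steps: the type-1 versus type-2 window comparison does force $S\cap T=\emptyset$, the pair-distinctness follows from comparing two type-1 windows, and neither Proposition~\ref{diambound} nor Theorem~\ref{divisibility} depends on this proposition, so there is no circularity (pseudocyclicity comes from Lemma~\ref{pseudocyclicity}, which needs only $a\ge 3$). The trade-off: the paper's argument is shorter and self-contained, while yours is more systematic, and the disjointness-plus-AM--GM idea has some hope of generalizing to other $(n,d)$ pairs, at the cost of requiring the full pseudocyclicity/divisibility apparatus and a case split on $3\mid a$ that the paper avoids entirely. Your $n=1$ and $n=2$ cases match the paper's.
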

\begin{proof}
It is clear that there is no nontrivial universal partial word for $n=1$. 

For $n=2$, assume $w$ is a nontrivial universal partial word. By Theorem \ref{diamondicity}, the diamondicity of $w$ is $d=1$, and by Corollary \ref{length}, $|w|=a+1\geq 4$, Now, $w=\diam x_2\diam x_4\diam \ldots$ or $w=x_1 \diam x_3 \diam x_5 \diam \ldots$. In the former case, the first three characters cover $x_2 x_2 $ twice. In the latter case, the second through fourth characters cover $x_3x_3$ twice. Therefore, $w$ is not a universal partial word. 

For $n=3$, assume $w$ is a nontrivial universal partial word.  By Theorem \ref{diamondicity}, $w$ must contain either the window frame $\diam \_ \_$ or the window frame $\diam\diam\_$. 

In the first case, consider the word $000$, which, without loss of generality, is not covered at the beginning or end of $w$. The partial word $w$ cannot contain the string $\diam00\diam$, since that would cover $000$ twice, so $w$ must contain the string $\diam x0\diam0y\diam$ in order to cover $000$. But this string covers $x0y$ twice. Therefore, $w$ is not a partial word.

In the second case, $\vert w \vert =a+2\geq 5$ and must contain $\diam\diam x\diam\diam, x\diam\diam y\diam$, or $\diam x\diam\diam y$. The first four characters of each of these partial words cover either $xxx$ or $xxy$ twice. Therefore, $w$ is not a universal partial word. 
\end{proof}

While these small $n$ are not fruitful, for $n=4$ not only are we able to find nontrivial examples, we can construct a family of universal partial words for any even alphabet size. Note that by Theorem \ref{divisibility}, there are no nontrivial universal partial words for $n=4$ when $a$ is odd, for then $\gcd(a,4)=1$.  

\begin{theorem}\label{thm:allinthefamily}
Let $a$ be even.
\begin{enumerate}

\item Construct the following sequence of $a^3/4$~letters:
\[
\underbrace{0,1,0,1,\ldots,0,1}_{a^2/2~\mathrm{letters}},
\underbrace{2,3,2,3,\ldots,2,3}_{a^2/2~\mathrm{letters}},\ldots,
\underbrace{a-2,a-1,a-2,a-1,\ldots,a-2,a-1}
	_{a^2/2~\mathrm{letters}}.
\]
Call this sequence~$\left<x_i\right>$, where $i \in [a^3/4]$.

\item Construct the following sequence of $a^2/2$~letters:
\[
\underbrace{0,1,0,1,\ldots,0,1}_{a~\mathrm{letters}},
\underbrace{2,3,2,3,\ldots,2,3}_{a~\mathrm{letters}},\ldots,
\underbrace{a-2,a-1,a-2,a-1,\ldots,a-2,a-1}
	_{a~\mathrm{letters}}.
\]
Repeat this sequence $a/2$~times to get a sequence of $a^3/4$~letters, and
call the resulting sequence~$\left<y_i\right>$, where $i \in [a^3/4]$.

\item Construct the following sequence of $a$~letters:
\[
1,0,3,2,5,4,\ldots,a-1,a-2.
\]
Repeat this sequence $a^2/4$~times to get a sequence of $a^3/4$~letters,
and call the resulting sequence~$\left<z_i\right>$, where $i \in [a^3/4]$.

\item For $i \in [a^3/4]$, let $w_i$ be the word $x_iy_iz_i$.

\item Take $u=w_1\diam w_2\diam\ldots\diam w_{a^3/4}\diam w_1$.

\end{enumerate}
Then $u$ is a universal partial word for $A^4$.
\end{theorem}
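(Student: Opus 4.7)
The plan is to show that $u$ covers every word in $A^4$ exactly once. First, observe that $|u|=(a^3/4)\cdot 4+3=a^3+3$, which matches the value $a^{n-d}+n-1$ from Corollary~\ref{length} for $n=4$, $d=1$. Every length-$4$ window of $u$ contains exactly one $\diam$, whose within-window position depends only on the window's starting index modulo $4$. Hence the $a^3$ windows cover $a\cdot a^3 = a^4 = |A^4|$ words with multiplicity, so it suffices to show that the map sending a pair (window, choice of letter replacing its $\diam$) to the resulting word of $A^4$ is injective.

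I classify the $a^3$ windows into four types by starting position mod $4$. For $i\in[a^3/4]$, using cyclic indices so that $w_{a^3/4+1}=w_1$, the four types are: Type A, $x_iy_iz_i\diam$; Type B, $y_iz_i\diam x_{i+1}$; Type C, $z_i\diam x_{i+1}y_{i+1}$; Type D, $\diam x_{i+1}y_{i+1}z_{i+1}$. Writing $i-1=p(a^2/2)+qa+r$ with $p,q\in\{0,\ldots,a/2-1\}$ and $r\in\{0,\ldots,a-1\}$, and setting $\epsilon=r\bmod 2$ and $s=\lfloor r/2\rfloor$, the construction yields $x_i=2p+\epsilon$, $y_i=2q+\epsilon$, and $z_i=2s+1-\epsilon$. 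In particular $x_i\equiv y_i\not\equiv z_i\pmod 2$, and $i\mapsto(x_i,y_i,z_i)$ is a bijection from $[a^3/4]$ onto $\{(X,Y,Z)\in A^3 : X\equiv Y\not\equiv Z\pmod 2\}$. A short case analysis on whether $r=a-1$ (using $a$ even) shows that incrementing $i$ flips $\epsilon$ to $1-\epsilon$, so $(x_{i+1},y_{i+1},z_{i+1})$ has parities $(1-\epsilon,1-\epsilon,\epsilon)$; in particular $y_i\not\equiv x_{i+1}\pmod 2$, and $z_i$, $x_{i+1}$, $y_{i+1}$ all share the common parity $1-\epsilon$.

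From these parity statements, the word $abcd$ covered by any window obeys a rigid parity pattern (with $\eta\in\{0,1\}$ and $\bar\eta=1-\eta$): Type A yields $(\eta,\eta,\bar\eta,\ast)$, Type D yields $(\ast,\eta,\eta,\bar\eta)$, Type B yields $(\eta,\bar\eta,\ast,\bar\eta)$, and Type C yields $(\eta,\ast,\eta,\eta)$. A direct enumeration of the $16$ parity patterns in $\{0,1\}^4$ shows that these four pattern families are pairwise disjoint and together cover all of $\{0,1\}^4$, so $A^4$ splits into four parity classes of size $a^4/4$ each. Each type covers at most $a\cdot(a^3/4) = a^4/4$ distinct words of its class, with injectivity within each type coming from inverting the parametrization and using that $i\mapsto i+1$ is a cyclic bijection on $[a^3/4]$. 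Since the four subtotals must sum to $a^4$, each containment is an equality, and $u$ covers every word in $A^4$ exactly once.

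The main obstacle is the Type C parity claim, because $(z_i,x_{i+1},y_{i+1})$ mixes data from consecutive indices and requires handling the block boundary $r=a-1\mapsto r'=0$ in the parametrization. Here $a$ even is essential: it forces $a-1$ to be odd, so at the boundary $\epsilon=1$ and the wrap-around satisfies $\epsilon'=0=1-\epsilon$, which keeps the parity toggle consistent across sub-block and block boundaries and preserves the uniform parity structure needed for the disjoint-partition argument.
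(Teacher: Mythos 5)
Your proof is correct, and while it rests on the same two parity invariants as the paper's ($x_i\equiv y_i\not\equiv i$ and $z_i\equiv i\pmod 2$, which your closed-form parametrization $x_i=2p+\epsilon$, $y_i=2q+\epsilon$, $z_i=2s+1-\epsilon$ makes explicit), it is organized quite differently. The paper argues by contradiction on a doubly covered word and splits into two cases: when the two windows have different frames, it runs six separate parity checks (one per pair of diamond positions); when they have the same frame, it uses an index-spacing argument ($y_i=y_j$ forces $|i-j|<a$ while $z_i=z_j$ forces $|i-j|\ge a$ within a common $x$-block). You instead show up front that the four window types cover four parity classes that partition $\{0,1\}^4$ --- a single unified statement replacing the paper's six sub-cases and giving a cleaner global explanation of why the construction works (each type bijectively enumerates exactly its own quarter of $A^4$) --- and you handle within-type injectivity by inverting the mixed-radix parametrization rather than by the spacing argument. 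The trade-off is that for Types B and C, where the letters of a window straddle indices $i$ and $i+1$, recovering $i$ from $(y_i,z_i,x_{i+1})$ or $(z_i,x_{i+1},y_{i+1})$ genuinely requires the carry analysis at sub-block and block boundaries that you only gesture at ("inverting the parametrization"); you should spell out that $r=a-1$ is detectable from $z_i$ (it forces $\epsilon=1$, $s=a/2-1$) and that the digits $(p,q)$ are then recoverable from $(p',q')$ by undoing the carry, since this is exactly the point where the paper's shorter spacing argument does comparable work. With that detail filled in, your argument is complete and, in my view, somewhat more transparent than the original.
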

\begin{proof}
Each $w_i$ has length $3$, and there are $a^3/4$ diamonds, so $$\vert u \vert = 3(a^3/4+1)+a^3/4 = a^3 + 3.$$ This is the length of a universal partial word with $n=4$ and diamondicity $d=1$, so it is sufficient to show that no word is covered twice. 

Suppose $u$ covers $v_1 v_2 v_3 v_4$ twice. Let $v$ and $v'$ be the two windows of $u$ which cover $v_1 v_2 v_3 v_4$. Either $v$ and $v'$ have a diamond in the same position or in a different position.
\begin{enumerate}

\item[Case 1:] 

$v$ and $v'$ have the same frame.\\
Let us consider the situation where $v= x_i y_i z_i\diam$ and $v' = x_j y_j z_j\diam $ with $i< j$. Suppose $x_i = x_j = c$. Then we must have that 
$$ \frac {\lfloor c/2 \rfloor a^2}{2} < i,j \leq \frac {(\lfloor c/2 \rfloor +1) a^2}{2}.$$ Within this range, $y_i=y_j$ implies that $j-i< a$, but $z_i = z_j$ implies that $j-i\geq a$. This is a contradiction, so $v$ and $v'$ do not cover the same word.

Note that any possible placement of the diamond in $v$ and $v'$ (e.g. $v=z_i\diam x_{i+1} y_{i+1}$ and $v'=z_j\diam x_{j+1}y_{j+1})$ can be handled in a similar manner as the above situation, since our argument only makes use of the difference between the indices, and these differences remain the same.
\item[Case 2:] $v$ and $v'$ have diamonds in different positions.\\
From our construction, for all $i \in [a^3/4]$, we have $i \not\equiv x_i \equiv y_i \pmod 2$ and $i \equiv z_i \pmod 2$.
\begin{itemize}
\item If $v=x_iy_iz_i\diam$ and $v'=y_jz_j\diam x_{j+1}$, then $x_i = y_j$ and $y_i = z_j$, so $$j \not\equiv y_j = x_i \equiv y_i = z_j \equiv j \pmod 2,$$ a contradiction.
\item If $v=x_iy_iz_i\diam$ and $v'=z_j\diam x_{j+1}y_{j+1}$, then $x_i= z_j$ and $z_i= x_{j+1}$, so $$i \equiv z_i = x_{j+1} \equiv z_{j} = x_i \not\equiv i \pmod 2,$$ a contradiction.  
\item If $v=x_iy_iz_i\diam$ and $v'=\diam x_j y_j z_j$, then $z_i = y_j$ and $y_i = x_j$, so $$i \equiv z_i = y_j \equiv x_j = y_i \not\equiv i\pmod 2,$$ a contradiction.
\item If $v=y_iz_i\diam x_{i+1}$ and $v'=z_j\diam x_{j+1} y_{j+1}$, then $y_i=z_j$ and $y_{j+1}=x_{i+1}$, so $$i+1 \equiv y_i = z_j \equiv y_{j+1} = x_{i+1} \not\equiv i+1 \pmod 2,$$ a contradiction.
\item If $v=y_iz_i\diam x_{i+1}$ and $v'=\diam x_{j} y_{j} z_{j}$, then $z_i=x_{j}$ and $x_{i+1}=z_j$, so $$j \not\equiv x_j = z_i \equiv x_{i+1} = z_j \equiv j \pmod 2,$$ a contradiction.
\item If $v=z_i\diam x_{i+1} y_{i+1}$ and $v'= \diam x_j y_j z_j$, $x_{i+1} = y_j$ and $y_{i+1} = z_j$, so $$j \equiv z_j = y_{i+1} \equiv x_{i+1} = y_j \not\equiv j \pmod 2,$$ a contradiction.
\end{itemize}

\end{enumerate}
Therefore no word $v_1v_2v_3v_4$ is covered twice by $u$, and $u$ is a universal partial word.
\end{proof}
We illustrate this construction by giving an example for $a=4$.

\begin{example}
The string
\[
001\diam110\diam003\diam112\diam021\diam130\diam023\diam132\diam201\diam310\diam203\diam312\diam221\diam330\diam223\diam332\diam001
\]
is a universal partial word for $\{0,1,2,3\}^4$.

Here \begin{itemize}
\item $\left<x_i\right>=\left<0,1,0,1,0,1,0,1,2,3,2,3,2,3,2,3\right>$ \item$\left<y_i\right>=\left<0,1,0,1,2,3,2,3,0,1,0,1,2,3,2,3\right>$ \item$\left<z_i\right>=\left<1,0,3,2,1,0,3,2,1,0,3,2,1,0,3,2\right>$.
\end{itemize}
\end{example}

For $a=2$, the construction yields the cyclic binary universal partial word $001\diam110\diam001$.

\section{Open Problems}\label{open}
While we have constructed an infinite family of universal partial words for $A^4$, increasing $n$ to $5$ already makes brute-force searches intractable, even when accounting for diamondicity.
\begin{question}Does there exist a nontrivial universal partial word over a non-binary alphabet for $n \ge 5$? If so, is it possible to construct a family of such universal partial words?\end{question}

In addition, there are not yet any known nontrivial universal partial words with diamondicity greater than $1$. While such words would be shorter, there are many more initial window frames to check. 
\begin{question}Is there a universal partial word with diamondicity $d>1$?\end{question}

In Section $4$, we were able to find an upper bound on diamondicity for a given $n$, but we would like to find a bound that is a constant fraction of $n$.

\begin{question}
Is there $\varepsilon\in (0,1)$ such that for all $n$ sufficiently large,  every universal partial word for $A^n$ has diamondicity $d \leq \varepsilon n$?
\end{question}

Every non-binary universal partial word is cyclic and has well-defined diamondicity. In contrast, binary universal partial words that are not cyclic and do not have well-defined diamondicity are common. It is known that cyclicity implies pseudocyclicity, which implies well-defined diamondicity, over any alphabet.

\begin{question}
Are well-defined diamondicity, pseudocyclicity, and cyclicity equivalent over a binary alphabet? 
\end{question}
Enumerative questions remain largely unstudied.

\begin{question} For a given $n$ and $A$, how many universal partial words for $A^n$ exist?\end{question}

Chen et al.~\cite{CKSpub} proved the existence of universal partial words over binary alphabets in several cases via Hamiltonian and Eulerian cycles in de Bruijn graphs. The properties of de Bruijn graphs in higher dimension are less studied, so the proof techniques are not readily applicable to larger alphabet sizes.
Other questions that have been studied in the context of de Bruijn sequences and other universal cycles may also be asked.

\begin{question}Given a word $v$ in $A^n$ and a universal partial word $w$ for $A^n$, how can one efficiently search for $v$ in $w$?\end{question}

\section{Acknowledgements}

We thank Jeremy Martin for his feedback and advice throughout the research process. We thank Nathan Graber for his collaboration during the Graduate Research Workshop in Combinatorics (GRWC 2016). We would also like to thank the organizers of GRWC 2016, as well as the other participants who provided insight on the problem. Lastly we thank our reviewers for their helpful suggestions toward publication.

\bibliographystyle{plain}
\bibliography{myreferences}

\begin{thebibliography}{1}

\bibitem{Blanchet-Sadri}
Emily Allen, Francine Blanchet-Sadri, Cameron Byrum, Mihai Cucuringu, and
  Robert Mercas.
\newblock Counting bordered partial words by critical positions.
\newblock {\em The Electronic Journal of Combinatorics}, 18(P138):1, 2011.

\bibitem{BCK}
F.~Blanchet-Sadri, M.~Cordier, and R.~Kirsch.
\newblock Border correlations, lattices, and the subgraph component polynomial.
\newblock {\em European Journal of Combinatorics}, 68:204 -- 222, 2018.

\bibitem{Brownstein}
Francine Blanchet-Sadri, Naomi~C Brownstein, Andy Kalcic, Justin Palumbo, and
  Tracy Weyand.
\newblock Unavoidable sets of partial words.
\newblock {\em Theory of Computing Systems}, 45(2):381--406, 2009.

\bibitem{CKSpub}
Herman~ZQ Chen, Sergey Kitaev, Torsten M\"utze, and Brian~Y Sun.
\newblock On universal partial words.
\newblock {\em Discrete Mathematics and Theoretical Computer Science}, 19 no.
  1, May 2017.

\bibitem{Chung}
Fan Chung, Persi Diaconis, and Ron Graham.
\newblock Universal cycles for combinatorial structures.
\newblock {\em Discrete Mathematics}, 110(1-3):43--59, 1992.

\end{thebibliography}

\bigskip
\noindent
{\em B.\ Goeckner,
Department of Mathematics,
University of Kansas,
Lawrence, KS 66045-7594,}
{\tt bennet@ku.edu} \\

\noindent
{\em C.\ Groothuis,
Department of Mathematics,
University of Nebraska,
Lincoln, NE 68588-0130},
{\tt corbin.groothuis@huskers.unl.edu} \\

\noindent
{\em C.\ Hettle, 
School of Mathematics, 
Georgia Institute of Technology, 
Atlanta, GA 30332-0160,} \\
{\tt chettle@gatech.edu} \\

\noindent
{\em B.\ Kell,
Google,
6425 Penn Avenue,
Pittsburgh, PA 15206,}
{\tt bkell@alumni.cmu.edu} \\

\noindent
{\em P.\ Kirkpatrick, 
Department of Mathematics, 
Lehigh University, 
Bethlehem, PA 18015,}
{\tt prk213@lehigh.edu} \\

\noindent
{\em R.\ Kirsch,
Department of Mathematics,
University of Nebraska,
Lincoln, NE 68588-0130,}
{\tt rkirsch@huskers.unl.edu} \\

\noindent
{\em R.\ Solava, 
Department of Mathematics,
Vanderbilt University,
Nashville, TN 37240,}\\
{\tt ryan.w.solava@vanderbilt.edu} \\

\end{document}